\newtheorem{thm}{Theorem}[section]
\newtheorem{cor}[thm]{Corollary}
\newtheorem{lem}[thm]{Lemma}
\newtheorem{remark}[thm]{Remark}
\newtheorem{conj}[thm]{Conjecture}
\newcommand{\C}{\mathbb C}
\newcommand{\F}{\mathbb F}
\newcommand{\hs}{\hspace{1cm}}
\author{

Himanshu Gupta\footnotemark[1]

and Vladislav Taranchuk\footnotemark[2]
}
\title{On the eigenvalues of the graphs $D(5,q)$}
\providecommand{\keywords}[1]{\textbf{\textit{Keywords}} #1}
\begin{document}

\maketitle
\footnotetext[1]{Department of Mathematical Sciences, University of Delaware, Newark, DE 19716-2553, USA, {\tt himanshu@udel.edu}.}
\footnotetext[2]{Department of Mathematical Sciences, University of Delaware, Newark, DE 19716-2553, USA, {\tt vladtar@udel.edu}.}

\begin{abstract}

    Let $q = p^e$, where $p$ is a prime and $e$ is a positive integer. The family of graphs $D(k, q)$, defined for any positive integer $k$ and prime power $q$, were introduced by Lazebnik and Ustimenko in 1995. To this day, the connected components of the graphs $D(k, q)$, provide the best known general lower bound for the size of a graph of given order and given girth. Furthermore, Ustimenko conjectured that the second largest eigenvalue of $D(k, q)$ is always less than or equal to $2\sqrt{q}$. If true, this would imply that for a fixed $q$ and $k$ growing, $D(k, q)$ would define a family of expanders that are nearly Ramanujan. In this paper we prove the smallest open case of the conjecture, showing that for all odd prime powers $q$, the second largest eigenvalue of $D(5, q)$ is less than or equal to $2\sqrt{q}$.

\end{abstract}

\keywords{
Expanders, Eigenvalues of graphs, Graph spectrum, Algebraically defined graphs, Cayley graphs, Irreducible representations, Character sums.
}

\section{Introduction and Motivation}\label{Section_intro}
Let $\Gamma$ be a graph with vertex set $V(\Gamma)$ and edge set $E(\Gamma)$. We use standard terminology from graph theory, see, e.g., Bollob\'{a}s \cite{B}. We refer to $|V(\Gamma)|$ and $|E(\Gamma)|$ as the order and the size of $\Gamma$, respectively. A graph is called $q$-regular if every vertex has exactly $q$ neighbors. All the graphs we consider are $q$-regular and simple (i.e., undirected, without loops and multiple edges). 

The {\em Cheeger constant} $h(\Gamma)$ is an important measure of the connectivity of graph $\Gamma$. It is defined as
\begin{align*}
h(\Gamma) = \min\left\{\frac{|\partial S|}{|S|}: S \subset V(\Gamma), 0 < |S| \leq \frac{|V(\Gamma)|}{2}\right\}
\end{align*}
where $\partial S$ denotes the set of edges with one endpoint in $S$ and another endpoint in $V(\Gamma)\backslash S$. An infinite family of $q$-regular graphs $\Gamma_n$ is called {\em expander family} if $h(\Gamma_n)$ are uniformly bounded away from $0$ together with $|V(\Gamma_n)| \to \infty$ as $n \to \infty$. We refer an interested reader to \cite{DSV,HLW,KS} for examples of families of expanders, their theory, history, and applications. 

The adjacency matrix $A$ of a graph $\Gamma$ has its rows and columns indexed by $V(\Gamma)$ and $A_{x,y} = 1$ if there is an edge between $x$ and $y$ and 0 otherwise. Since $A$ is a real symmetric matrix, all of its eigenvalues are real. We then call the sequence $q= \lambda_1 \geq \lambda_2 \geq \hdots \geq \lambda_{|V(\Gamma)|}$ of the eigenvalues of $A$ the {\em spectrum} of $\Gamma$. In general, it is hard to compute $h(\Gamma)$. However, there is an interesting connection between $h(\Gamma)$ and the spectrum of $\Gamma$, given by the following general result. 
\begin{thm}[Alon-Milman \cite{AM,A}, Dodziuk \cite{D}, and Mohar \cite{mohar}]
Let $\Gamma$ be a $q$-regular connected graph with second largest eigenvalue $\lambda_2$. Then
\begin{align*}
\frac{(q-\lambda_2)}{2} \leq h(\Gamma) \leq \sqrt{q^2-\lambda_2^2}.
\end{align*}
\end{thm}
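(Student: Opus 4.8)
The statement is the discrete Cheeger inequality for regular graphs, so the plan is to prove the two bounds separately, in both cases passing through the Laplacian $L = qI - A$. Since $\Gamma$ is $q$-regular and connected, $q = \lambda_1$ is a simple eigenvalue with eigenvector $\mathbf 1$, the Laplacian eigenvalues are $q - \lambda_i$, and the second smallest is $q - \lambda_2$. I would base everything on the variational identity
\[
q - \lambda_2 = \min_{f \perp \mathbf 1,\, f \neq 0} \frac{\sum_{uv \in E(\Gamma)}(f(u) - f(v))^2}{\sum_{v} f(v)^2},
\]
which rewrites the spectral gap as a minimum of an edge Dirichlet form.

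For the lower bound $\tfrac{q-\lambda_2}{2} \le h(\Gamma)$ I would feed a single test vector into this identity. Let $S$ attain $h(\Gamma)$, with $0 < |S| \le |V(\Gamma)|/2$, and set $f = \mathbf 1_S - \tfrac{|S|}{|V(\Gamma)|}\mathbf 1$, which is orthogonal to $\mathbf 1$. A direct computation gives numerator $\sum_{uv\in E}(f(u)-f(v))^2 = |\partial S|$ — the additive constant cancels on every edge, leaving $1$ exactly on the crossing edges — and denominator $\sum_v f(v)^2 = |S|\,(|V(\Gamma)|-|S|)/|V(\Gamma)|$. Substituting and using $|V(\Gamma)| - |S| \ge |V(\Gamma)|/2$ yields $q - \lambda_2 \le 2\,|\partial S|/|S| = 2h(\Gamma)$. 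This direction is routine.

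The upper bound $h(\Gamma) \le \sqrt{q^2 - \lambda_2^2}$ is the substantive half. Here I would take an actual eigenvector $f$ for $\lambda_2$, so that $f \perp \mathbf 1$ and both quadratic-form identities hold \emph{exactly}:
\[
\sum_{uv\in E}(f(u)-f(v))^2 = (q-\lambda_2)\|f\|^2, \qquad \sum_{uv\in E}(f(u)+f(v))^2 = (q+\lambda_2)\|f\|^2,
\]
the second because $\langle Af,f\rangle = \lambda_2\|f\|^2$ and $\sum_{uv\in E}(f(u)^2+f(v)^2) = q\|f\|^2$. Applying Cauchy–Schwarz edge-by-edge to $|f(u)^2 - f(v)^2| = |f(u)-f(v)|\,|f(u)+f(v)|$ and multiplying the two identities produces
\[
\sum_{uv\in E}\big|f(u)^2 - f(v)^2\big| \le \sqrt{(q-\lambda_2)(q+\lambda_2)}\;\|f\|^2 = \sqrt{q^2-\lambda_2^2}\;\|f\|^2.
\]
It is precisely the \emph{exact} value $(q+\lambda_2)\|f\|^2$ of the signless form, rather than the crude bound $2q\|f\|^2$, that upgrades the classical constant $\sqrt{2q(q-\lambda_2)}$ to the sharp $\sqrt{q^2-\lambda_2^2}$.

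It remains to bound the left-hand side below in terms of $h(\Gamma)$, and this co-area/sweep step is where I expect the main difficulty. Ordering the vertices by the values of $f$ and sweeping through the threshold sets $S_t = \{v : f(v) \ge t\}$, one can express $\sum_{uv\in E}|f(u)^2 - f(v)^2|$ as an increment-weighted sum of the cut sizes $|\partial S_t|$, each of which satisfies $|\partial S_t| \ge h(\Gamma)\,\min(|S_t|,\,|V(\Gamma)| - |S_t|)$. The tension is that the two ingredients pull in opposite directions: the exact identity giving the sharp constant wants the untruncated eigenvector $f$, which necessarily changes sign since $f \perp \mathbf 1$, whereas producing legitimate cuts of size at most $|V(\Gamma)|/2$ (where $|\partial S_t|\ge h(\Gamma)|S_t|$ applies) wants one to split $f$ at a median threshold and sweep each side separately. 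Reconciling these — choosing the split so that the loss incurred by the one-sided Cheeger bound is absorbed and the surviving normalizer still matches $\|f\|^2$, so that one emerges with $h(\Gamma)\,\|f\|^2 \le \sqrt{q^2-\lambda_2^2}\,\|f\|^2$ — is the heart of Mohar's sharpening of the classical inequality and is the step I would expect to demand the most care.
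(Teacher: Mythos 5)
First, a point of reference: the paper contains no proof of this theorem at all — it is quoted as known background (Alon--Milman, Dodziuk, Mohar) and the reader is referred to \cite[Th.\ 1.2.3]{DSV}, so your proposal can only be judged on its own merits. On those merits, your lower-bound half is complete and correct: $f = \mathbf 1_S - \tfrac{|S|}{|V(\Gamma)|}\mathbf 1$ is orthogonal to $\mathbf 1$, the Rayleigh-quotient computation is right, and $|V(\Gamma)|-|S|\ge |V(\Gamma)|/2$ finishes it.

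The upper-bound half, however, is not a proof: the decisive sweep step is only described, and the way you frame it points away from the correct argument. You claim the sharp constant $\sqrt{q^2-\lambda_2^2}$ ``wants the untruncated eigenvector'' because the two quadratic-form identities hold exactly only for $f$ itself, and you then present the sign-changing of $f$ as a tension to be reconciled with the sweep. In fact the sign-changing $f$ cannot be swept at all — the threshold sets on one side of the sweep exceed $|V(\Gamma)|/2$ vertices, where $|\partial S_t|\ge h(\Gamma)|S_t|$ simply fails — and no such reconciliation exists or is needed. The standard resolution is to truncate \emph{first}: replace $f$ by $g=\max(f,0)$, after replacing $f$ by $-f$ if necessary so that $|\mathrm{supp}(g)|\le |V(\Gamma)|/2$ (possible since the supports of the positive and negative parts are disjoint). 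The exact identities are then lost, but they were never the point: what survives truncation is the one-sided inequality $(Ag)(v)\ge \lambda_2 g(v)$ for $v\in\mathrm{supp}(g)$ (because $g\ge f$ pointwise and $g=f$ on the support), whence $\langle Ag,g\rangle\ge\lambda_2\|g\|^2$, and therefore, writing $E = E(\Gamma)$,
\[
\Bigl(\sum_{uv\in E}(g(u)-g(v))^2\Bigr)\Bigl(\sum_{uv\in E}(g(u)+g(v))^2\Bigr)
=\bigl(q\|g\|^2\bigr)^2-\langle Ag,g\rangle^2\le \bigl(q^2-\lambda_2^2\bigr)\|g\|^4,
\]
provided $\lambda_2\ge 0$ (if $\lambda_2<0$ then $\Gamma$ is complete, by interlacing with a $2\times 2$ zero principal submatrix, and the bound is checked directly). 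Now Cauchy--Schwarz edge-by-edge and the co-area formula applied to the threshold sets of $g^2$ — every one of which lies inside $\mathrm{supp}(g)$ and hence has at most $|V(\Gamma)|/2$ vertices, so the Cheeger bound applies to all of them — give $h(\Gamma)\|g\|^2\le\sum_{uv\in E}\bigl|g(u)^2-g(v)^2\bigr|\le\sqrt{q^2-\lambda_2^2}\,\|g\|^2$. So the concrete gap is this: the missing idea is that positive truncation preserves the one-sided spectral inequality, and the sharp constant comes from the product $\bigl(q\|g\|^2\bigr)^2-\langle Ag,g\rangle^2$ for the truncated function, not from exact identities for the eigenvector itself; your plan of sweeping the full eigenvector and ``absorbing the loss'' at a median split is the step that would fail.
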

See Davidoff, Sarnak, and Valette \cite[Th.\ 1.2.3]{DSV} for a proof. The difference $q-\lambda_2$ of a $q$-regular graph $\Gamma$ is called the {\em spectral gap} of $\Gamma$. Thus, one way of proving a family of regular graphs are expanders is by studying the spectral gap of the graphs in the family. The larger the spectral gap, the better the expansion. However, there is a limit to how large the spectral gap can be due to the famous Alon-Boppana theorem.

\begin{thm}[Alon-Boppana \cite{A,N}]
Let $\{\Gamma_n\}$ be an infinite family of $q$-regular connected graphs with $|V(\Gamma_n)| \rightarrow \infty$ as $n \rightarrow \infty$. Then  
$$
{\liminf_{n\to \infty}} \lambda_2(\Gamma_n) \geq 2\sqrt{q-1}.
$$
\end{thm}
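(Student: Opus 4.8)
The plan is to exploit the fact that $2\sqrt{q-1}$ is precisely the spectral radius of the infinite $q$-regular tree $T_q$, which is the universal cover of every connected $q$-regular graph: intuitively, a large $q$-regular graph must contain regions that look locally like $T_q$ to a large depth, and such regions force an eigenvalue close to $2\sqrt{q-1}$. Concretely, I would prove the quantitative statement that any connected $q$-regular graph $\Gamma$ of diameter $D$ satisfies $\lambda_2(\Gamma) \geq 2\sqrt{q-1}\,(1 - c/D)$ for a constant $c = c(q)$. This suffices: a $q$-regular graph on $n$ vertices has at most $1 + q + q(q-1) + \cdots + q(q-1)^{D-1}$ vertices, so $D \geq \log_{q-1} n - O(1)$; hence $|V(\Gamma_n)| \to \infty$ forces $D(\Gamma_n) \to \infty$, and the quantitative bound yields $\liminf_n \lambda_2(\Gamma_n) \geq 2\sqrt{q-1}$.

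The lower bound on $\lambda_2$ comes from the variational characterization. Since $\Gamma$ is connected and $q$-regular, Perron--Frobenius gives $\lambda_1 = q$ with simple eigenvector $\mathbf 1$, and $\lambda_2 = \max\{\langle Af,f\rangle/\langle f,f\rangle : f \neq 0,\ f \perp \mathbf 1\}$. The key device is a two-function lemma: if $f,g$ are nonzero with supports at distance $\geq 2$, so that $\langle Af,g\rangle = \langle Ag,f\rangle = 0$, then any combination $h = \alpha f + \beta g$ has $\langle Ah,h\rangle = \alpha^2\langle Af,f\rangle + \beta^2\langle Ag,g\rangle$ and $\langle h,h\rangle = \alpha^2\langle f,f\rangle + \beta^2\langle g,g\rangle$; choosing $(\alpha,\beta)\neq 0$ with $h \perp \mathbf 1$ (possible since $\mathrm{span}(f,g)$ is two-dimensional) makes $R(h)$ a weighted mediant of $R(f)$ and $R(g)$, so $\lambda_2 \geq R(h) \geq \min\{R(f), R(g)\}$, where $R(\cdot)$ is the Rayleigh quotient. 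I would then pick $u,v$ with $d(u,v) = D$, set $k = \lfloor D/2\rfloor - 1$, and support $f$ and $g$ on the balls $B_k(u)$ and $B_k(v)$; since $d(u,v) = D \geq 2k+2$, any edge between the two balls would force $d(u,v) \leq 2k+1$, so the balls are at distance $\geq 2$ and the lemma applies.

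It remains to construct a single radial test function on $B_k(u)$ with $R(f) \geq 2\sqrt{q-1}\,(1-c/k)$. Writing $U_i$ for the sphere of vertices at distance $i$ from $u$, I would take $f$ constant on each sphere, $f \equiv t_i$ on $U_i$, with $t_i = (q-1)^{-i/2}$; this mimics the radial eigenfunction of $T_q$ at the spectral edge. Expanding $\langle Af,f\rangle = 2\sum_{\{x,y\}\in E} f(x)f(y)$ and $\langle f,f\rangle = \sum_i |U_i|\,t_i^2$, and using only the breadth-first layering (edges run within a sphere or between consecutive spheres) together with the regularity identity $q|U_i| = (\text{edges to } U_{i-1}) + 2(\text{edges inside } U_i) + (\text{edges to } U_{i+1})$, the weights $t_i = (q-1)^{-i/2}$ are engineered so that each layer contributes essentially the factor $2\sqrt{q-1}$, with loss only from truncation at the outer sphere $U_k$. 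A linear damping $t_i = (1 - i/(k+1))(q-1)^{-i/2}$ controls that boundary loss and produces the claimed $1 - O(1/k)$ factor.

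The main obstacle is the Rayleigh-quotient estimate on $B_k(u)$ when $\Gamma$ has small girth, so that $B_k(u)$ is not a tree: one cannot assume each sphere attains the full tree size $q(q-1)^{i-1}$, and there may be edges inside spheres. The point I would stress is that the weights $(q-1)^{-i/2}$ are exactly those for which $q$-regularity plus the consecutive-layer structure alone force the bound, independently of the actual sphere sizes and intra-sphere edges; pushing this estimate through the boundary correction is the technical heart of the argument. I note that the trace method gives an alternative route via $\mathrm{tr}(A^{2k}) = \sum_i \lambda_i^{2k}$ and the count of closed non-backtracking walks in $T_q$, but it naturally bounds $\max_{i\geq 2}|\lambda_i|$ rather than the second \emph{largest} eigenvalue, so I prefer the test-function approach for the statement as given.
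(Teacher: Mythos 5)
There is no ``paper's own proof'' to compare against here: this theorem is quoted as classical background, with its proof delegated to the cited works of Alon \cite{A} and Nilli \cite{N}. What you have written is, in outline, precisely Nilli's test-function argument, so you have effectively reconstructed the proof from the cited literature. Your reduction steps are all sound: the Moore bound forces the diameter of $\Gamma_n$ to grow with $|V(\Gamma_n)|$; the two-function lemma is correct (disjoint supports at distance $\geq 2$ kill the cross terms $\langle Af,g\rangle$, the span of $f,g$ meets the hyperplane $\mathbf{1}^{\perp}$ nontrivially, and the mediant inequality gives $R(h)\geq \min\{R(f),R(g)\}$); and $k=\lfloor D/2\rfloor-1$ does place the two balls at distance $\geq 2$.

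The one point that deserves scrutiny is your claim that the loss in the Rayleigh-quotient estimate comes \emph{only} from truncation at the outer sphere. The root is the unique vertex with $q$ forward edges rather than $q-1$, and in the frequently used Dirichlet-form accounting $\langle (qI-A)f,f\rangle=\sum_{\{x,y\}\in E}(f(x)-f(y))^2$ this creates an additive error at the center which does \emph{not} vanish when the spheres, and hence $\|f\|^2$, stay bounded (this can happen in a $q$-regular graph, e.g.\ prism-like graphs); that is exactly why Nilli centers her balls at \emph{edges} instead of vertices. Your accounting, however, genuinely avoids the problem, and it is worth recording why: expanding $\langle Af,f\rangle$ directly and substituting the regularity identity $q|U_i|=e(U_{i-1},U_i)+2e(U_i)+e(U_i,U_{i+1})$ into $2\sqrt{q-1}\,\|f\|^2$, the coefficients of each between-sphere edge count match exactly, since $2t_it_{i+1}=\tfrac{2\sqrt{q-1}}{q}\left(t_i^2+t_{i+1}^2\right)$ when $t_i=(q-1)^{-i/2}$; the comparison of intra-sphere coefficients reduces to $q\geq 2\sqrt{q-1}$, i.e.\ $(q-2)^2\geq 0$; and the only uncancelled term is $\tfrac{2\sqrt{q-1}}{q}\,e(U_k,U_{k+1})\,t_k^2$. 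Since $|U_i|t_i^2$ is non-increasing for $i\geq 1$ and $e(U_k,U_{k+1})\leq (q-1)|U_k|$, this term is at most $2\sqrt{q-1}\,\|f\|^2/k$, giving $R(f)\geq 2\sqrt{q-1}\left(1-1/k\right)$ with no damping needed; your damped weights are a harmless alternative but not required. So the proposal is correct and completable as planned; it is simply the standard proof rather than anything appearing in this paper.
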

A $q$-regular connected graph $\Gamma$ is called {\em Ramanujan} if $\lambda_2(\Gamma) \leq 2\sqrt{q-1}$. Thus, a family of $q$-regular Ramanujan graphs is an asymptotically best possible expander family with respect to the spectral lower bound. One of the central problems in spectral graph theory is to give explicit constructions for families of expanders. Even better, but more difficult, is to give constructions of infinite families of Ramanujan graphs. Let $G$ be a group, and let $S$ be a nonempty finite subset of $G$ such that $S$ is closed under inverses, i.e., $S = S^{-1}$. The {\em Cayley graph} $\text{Cay}(G,S)$, is the graph with vertex set $G$, and two vertices $x$ and $y$ are adjacent if and only if $y = xs$ for some $s \in S$. The celebrated constructions of Ramanujan graphs by Lubotzky-Phillips-Sarnak \cite{LPS}, and independently by Margulis \cite{M} are Cayley graphs. These constructions are extended by Morgenstern in \cite{MOR}, and are thoroughly explained in \cite{DSV}. 

 In this paper, we are interested in a conjecture of Ustimenko. Informally speaking, the conjecture claims that the families of $q$-regular bipartite graphs constructed by Lazebnik and Ustimenko \cite{LU} are `close' to being Ramanujan for each prime power $q$. 

For rest of the paper, let $q = p^e$, where $p$ is a prime and $e$ is a positive integer. Let $\mathbb{F}_q$ be the finite field of $q$ elements, and $\F_q^k$ be a vector space of dimension $k$ over $\F_q$. The family of $q$-regular bipartite graphs with $2q^k$ vertices, denoted by $D(k,q)$ where $k\geq 2$, was constructed by Lazebnik and Ustimenko in \cite{LU}. The graphs $D(k,q)$ are connected for $k\leq 5$ and $q$ odd, and are disconnected for $k\geq 6$ and $q$ odd; see Lazebnik, Ustimenko, and Woldar \cite{LUW1995, LUW}. As the graphs $D(k, q)$ are edge-transitive, all connected components are isomorphic. Let $CD(k, q)$ denote one of them.
The graphs $CD(k, q)$ provide the best known general lower bound for the size of a graph with given order and given girth. For surveys on these graphs and their applications, see Lazebnik and Woldar \cite{LW}, Lazebnik, Sun and Wang \cite{LSW}, and Sun's Ph.D. thesis \cite[Chapter 2]{Sun}. 

The structural properties of $CD(k,q)$ are well studied. However, the spectrum of $D(k,q)$ is only known for $k=2,3$, and $4$. The spectrum of $D(2,q)$ and $D(3,q)$ was computed by Li, Lu, and Wang in \cite[Sec.\ 5]{LLW}. In fact, graphs in these two families are isomorphic to the particular cases of the Wenger graphs. The spectrum of Wenger graphs were completely determined in Cioab\u{a}, Lazebnik, and Wang \cite{CLL}. The distinct eigenvalues of $D(2,q)$ and $D(3,q)$ are $\{\pm\sqrt{q},0,\pm q\}$ and $\{\pm\sqrt{q},\pm \sqrt{2q},0,\pm q\}$, respectively. For details  of the history of the following conjecture, see Moorehouse, Sun, and Williford \cite{MSW}. 

\begin{conj}[Ustimenko] \label{Main_Conjecture}
For all $(k,q)$, $D(k,q)$ (and therefore $CD(k, q)$) has second largest eigenvalue less than or equal to $2\sqrt{q}$.
\end{conj}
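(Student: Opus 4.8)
The plan is to pass from the combinatorial description of $D(k,q)$ to a representation-theoretic one and then reduce the eigenvalue bound to estimates for exponential sums over $\F_q$. The defining equations of $D(k,q)$ form a recursive system over $\F_q$ relating the coordinates of a point to those of a line, and this algebraic structure produces a large, vertex-transitive automorphism group of $CD(k,q)$. The first step is to make this explicit: realize $CD(k,q)$ as a Cayley graph $\mathrm{Cay}(G_k,S)$ of a finite nilpotent $p$-group $G_k$ built from the defining relations, with $S$ a symmetric generating set of size $q$ indexed by $\F_q$. (The group is necessarily non-abelian: for an abelian group the eigenvalues would be the one-dimensional character sums $\sum_{s\in S}\chi(s)$, and a bounded-degree abelian Cayley family cannot be an expander family, let alone nearly Ramanujan.) Once this is done, the spectrum of $CD(k,q)$ is the union over the irreducible representations $\rho$ of $G_k$ of the eigenvalues of the Hermitian matrices $M_\rho = \sum_{s\in S}\rho(s)$ (counted with appropriate multiplicity), and the conjecture becomes the single assertion that $\|M_\rho\| \le 2\sqrt q$ for every nontrivial $\rho$ and every $k$.

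Next I would stratify the spectrum using the tower of covering maps $CD(k,q)\to CD(k-1,q)$ coming from the recursion, under which $CD(k,q)$ is an $(\F_q,+)$-voltage cover of $CD(k-1,q)$. The eigenvalues then split according to the additive characters $\psi$ of $\F_q$ indexing the cover: the trivial character returns the spectrum of $CD(k-1,q)$, which an induction on $k$ would already have bounded by $2\sqrt q$, while each nontrivial $\psi$ contributes the spectrum of the $\psi$-twisted adjacency operator on $CD(k-1,q)$. Equivalently, in the Cayley picture these are the representations $\rho$ whose central character is $\psi$, and Stone--von Neumann--type theory expresses the eigenvalues of $M_\rho$ through exponential sums $\sum_{t\in\F_q}\psi(f(t))$ and their multivariate analogues, with the polynomials $f$ read off from the defining equations. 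At the bottom level these are Gauss sums of modulus exactly $\sqrt q$ --- which is why $2\sqrt q$ is the correct target --- and in general they are higher exponential sums to be controlled by the Weil bound in one variable and Deligne's estimates in several variables.

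The main obstacle, and the reason the conjecture remains open, is uniformity in $k$. For each fixed small $k$ one can enumerate the irreducible representations of $G_k$ and estimate the finitely many resulting sums directly; this is how $k\le 4$ were handled and how the present paper dispatches $k=5$. But the dimensions of the representations and the number of variables in the associated sums grow with $k$, and a direct appeal to Deligne's bounds produces an estimate of the form $C_k\sqrt q$ in which the constant $C_k$ increases with $k$ rather than staying pinned at $2$. Proving the sharp constant would require showing that, after the recursion is fully unwound, each $M_\rho$ has at most two eigenvalues of modulus $\sqrt q$ --- equivalently, that the exponential-sum varieties attached to $D(k,q)$ behave like curves rather than higher-dimensional varieties, so that the clean Weil bound governs every level uniformly. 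Establishing this structural collapse for all $k$ simultaneously (and for every characteristic) is precisely the step no current technique achieves, which is why one is reduced to settling the conjecture one value of $k$ at a time.
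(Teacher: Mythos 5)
The first thing to get straight is that the statement you were asked to prove is labeled a conjecture, and it genuinely is one: neither you nor the paper proves it. The paper establishes only the case $k=5$ for odd $q$ (Theorem \ref{Main_Theorem}) and, via the fact that $D(6,q)$ is a disjoint union of $q$ copies of $D(5,q)$, the case $k=6$ (Corollary \ref{Main_Corollary}); Conjecture \ref{Main_Conjecture} itself is stated as open and remains so for $k\geq 7$. Your proposal is likewise not a proof: its final paragraph concedes that the decisive step --- a bound with the sharp constant $2$, uniform in $k$, for every nontrivial irreducible representation --- ``is precisely the step no current technique achieves.'' An argument whose key step is acknowledged to be out of reach has, by its own account, a genuine gap. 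What you have written is an accurate and reasonably well-informed description of \emph{why} the problem is open, not an argument that closes it, and no grading standard can treat the two as the same thing.

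On the secondary question of whether your sketch matches how the tractable cases are actually handled: broadly yes (Cayley graph over a $p$-group, irreducible representations, Weil-type bounds), but several details are off. The paper does not realize the bipartite graph $D(k,q)$ itself as $\mathrm{Cay}(G,S)$ with $|S|=q$; it realizes one halved graph (the point collinearity graph) of $D(5,q)$ as a Cayley graph of $G=(\F_q^5,\cdot)$ with $|S|=q(q-1)$, and the target bound at that level is $3q$, not $2\sqrt{q}$: an eigenvalue $\lambda\leq 3q$ of the halved graph corresponds to eigenvalues $\pm\sqrt{\lambda+q}$ of the bipartite graph, of modulus at most $2\sqrt{q}$. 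At $k=5$ everything reduces to one-variable character sums controlled by Weil's bound (giving $|\lambda|\leq 3\sqrt{q}$ for auxiliary $q\times q$ matrices, scaled by a Gauss sum of modulus $\sqrt{q}$); no multivariate Deligne estimates enter. Your induction on $k$ via $\F_q$-covers is also not what the paper does, and the covering property it cites (Section \ref{Section_Finalremarks}) runs in the opposite direction from what your induction needs: the spectrum of $D(k,q)$ embeds into that of $D(k+1,q)$, so upper bounds transfer \emph{downward} from level $k+1$ to level $k$, while your inductive step still requires bounding all the nontrivially twisted operators at the new level --- which is exactly the unproved part. Finally, the group is not ``necessarily non-abelian'' for every $k$: for $k=2,3$ (the Wenger cases) the relevant group is abelian, as the paper notes; non-expansion of bounded-degree abelian Cayley families only forces non-abelianness asymptotically in $k$.
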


For a graph $\Gamma$ the {\em distance-two graph} $\Gamma^{(2)}$ is the graph with the vertex set as $V(\Gamma)$ and two vertices are adjacent if and only if their distance in $\Gamma$ is exactly two. If $\Gamma$ is a connected bipartite graph, then $\Gamma^{(2)}$ is a graph with two components. Let $V_1$ and $V_2$ denote the partite sets of $\Gamma$. The induced subgraphs on these components are called the {\em halved graphs} of $\Gamma$ denoted by $\Gamma^{(2)}(V_1)$ and $\Gamma^{(2)}(V_2)$.  If $\Gamma$ is a $q$-regular bipartite graph and contains no $4$-cycles, then the halved graphs are $q(q-1)$-regular simple graphs. In this case, there is a simple relationship between the eigenvalues of $\Gamma$ and the eigenvalues of $\Gamma^{(2)}(V_j)$: every eigenvalue $\lambda$ of $\Gamma^{(2)}(V_j)$ with multiplicity $m$ corresponds to a pair of eigenvalues $\pm \sqrt{\lambda+q}$ of $\Gamma$, each with multiplicity $m$ (or a single eigenvalue $0$ of multiplicity $2m$ in the case $\lambda = -q$), see \cite{CLL}. Furthermore, the spectrums of both halved graphs are the same. Consequently, to prove Conjecture \ref{Main_Conjecture} it is enough to prove that the second largest eigenvalue of one of the halved graphs of $CD(k,q)$ is less than or equal to $3q$. 

In 2017, the spectra of the halved graphs of $D(4, q)$ were determined in \cite{MSW} and consequently proving the conjecture for $D(4,q)$. Motivated by their results and ideas, in this paper, we prove the next open case of the conjecture for graphs $CD(5,q)$ whenever $q$ is an odd prime power. In particular, we prove the following theorem.

\begin{thm}\label{Main_Theorem}
Let $q = p^e$, where $p$ is an odd prime and $e$ is a positive integer. Then the second largest eigenvalue of $CD(5,q)$ is less than or equal to $2\sqrt{q}$. 
\end{thm}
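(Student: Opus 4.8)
The plan is to follow the halved-graph strategy laid out in the excerpt: rather than attack the bipartite graph $CD(5,q)$ directly, I would study one of its halved graphs $\Gamma^{(2)}(V_j)$. By the correspondence quoted above, the second largest eigenvalue of $CD(5,q)$ is at most $2\sqrt{q}$ precisely when the second largest eigenvalue $\mu_2$ of the halved graph satisfies $\mu_2 + q \le 4q$, i.e. $\mu_2 \le 3q$. (I must first check that $CD(5,q)$ has girth large enough—at least $6$—so that the halved graphs are genuinely $q(q-1)$-regular simple graphs with no $4$-cycles; this is known for these graphs.) So the entire problem reduces to a single inequality on one eigenvalue of an explicit $q(q-1)$-regular graph on $q^5$ (or $q^4$, after accounting for the connected component) vertices.

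The key structural input I would exploit is that the halved graph carries the structure of a \emph{Cayley graph} over an elementary abelian or nilpotent group arising from the $\F_q$-coordinate description of $D(5,q)$. Following the method of Moorehouse, Sun, and Williford for $D(4,q)$, I would write down explicit coordinates for the vertices and the adjacency relation, identify the connection set $S$ defining the halved graph, and then diagonalize the adjacency operator using the irreducible characters of the underlying group. Concretely, for an abelian part the eigenvalues are character sums $\sum_{s \in S} \psi(s)$ over additive characters $\psi$, and for the nonabelian part one replaces characters by traces of irreducible representations. The eigenvalue bound $\mu_2 \le 3q$ then becomes a bound on the modulus of a family of exponential/character sums indexed by the nontrivial characters (equivalently, by the nonprincipal irreducible representations), which is exactly where the keywords \emph{irreducible representations} and \emph{character sums} in the abstract point.

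The main steps, in order, are: (i) fix coordinates for $CD(5,q)$ and derive the precise adjacency rule of the halved graph, verifying $q(q-1)$-regularity and the absence of $4$-cycles; (ii) realize the halved graph as a Cayley graph and decompose its adjacency algebra via the irreducible representations of the defining group, reducing the spectrum to a list of character sums $T_\chi = \sum_{s\in S}\chi(s)$; (iii) separate out the trivial/principal representations, which produce the trivial eigenvalue $q(q-1)$, and then prove the uniform bound $|T_\chi| \le 3q$ for every nonprincipal $\chi$; and (iv) translate this back through the halved-graph correspondence to conclude $\lambda_2(CD(5,q)) \le 2\sqrt{q}$. Throughout I would keep careful track of the distinction between $D(5,q)$ and its connected component $CD(5,q)$, since the spectrum of the disconnected object may contain spurious eigenvalues from the other components that must not be mistaken for $\lambda_2$ of $CD(5,q)$.

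The hard part will be step (iii): bounding the nontrivial character sums $|T_\chi|$ by $3q$. Unlike the $D(4,q)$ case, the extra coordinate in $D(5,q)$ produces sums that are no longer one-dimensional exponential sums amenable to a direct Weil bound; instead they will be multivariable character sums, or traces of higher-dimensional irreducible representations, whose absolute values are genuinely delicate to control. I expect that for ``generic'' characters a Weil-type square-root bound will give far more than enough room, and the real difficulty will lie in a thin family of degenerate characters where the sum nearly achieves its extreme value; these will likely require an ad hoc, case-by-case estimate (possibly isolating a quadratic or cubic form in the exponent and completing the square, or reducing to a Kloosterman- or Salié-type sum with a known evaluation) to verify the sharp constant $3q$ rather than merely $O(q)$. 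Getting the constant exactly right—so that the bound is $2\sqrt{q}$ and not $C\sqrt{q}$ for some larger $C$—is the crux, and it is presumably what makes this the ``smallest open case'' rather than a routine corollary of the $D(4,q)$ analysis.
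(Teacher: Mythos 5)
Your overall strategy --- pass to a halved graph, reduce the target to $\mu_2 \le 3q$, realize the halved graph as a Cayley graph, and bound the nontrivial part of the spectrum via representation theory and Weil-type estimates --- is exactly the paper's route, and your steps (i) and (iv) are fine. The genuine gap is in steps (ii)--(iii): you reduce the spectrum to the character sums $T_\chi=\sum_{s\in S}\chi(s)$, saying that for the nonabelian part one ``replaces characters by traces of irreducible representations.'' That replacement is not valid. The group $G$ that actually arises (a Heisenberg-like group structure on $\F_q^5$, non-abelian for odd $q$) has $q^3-q$ inequivalent irreducible representations of dimension $q$, and the characteristic polynomial of the Cayley graph factors as $\prod_{\rho}\det\bigl(xI-\rho(S)\bigr)^{\dim\rho}$ over the irreducibles $\rho$, where $\rho(S)=\sum_{s\in S}\rho(s)$. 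So each $q$-dimensional $\rho$ contributes all $q$ eigenvalues of the $q\times q$ matrix $\rho(S)$, each with multiplicity $q$; the trace $\mathrm{Tr}\,\rho(S)=\sum_{s\in S}\chi_\rho(s)$ is only the \emph{sum} of those eigenvalues. A bound of $3q$ on the traces neither implies nor is implied by the bound you need on the individual eigenvalues, so the plan as written stops short of the actual problem. (For the linear characters your reduction is correct, and indeed the paper disposes of those by citing the $D(4,q)$ analysis.)

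Closing that gap is the bulk of the paper: after computing the irreducible representations explicitly, one must diagonalize each matrix $M_{\alpha,\beta,\gamma}(S)$ and $N_{\tau,\mu}(S)$ by hand. The paper conjugates them by explicit diagonal matrices into matrices $U_\beta$ and $W_{\beta,\gamma}$, and then exhibits a full eigenbasis: $q-2$ eigenvectors $v_\chi$ with $(v_\chi)_j=\chi(j)$, one for each nontrivial multiplicative character $\chi$, whose eigenvalues are one-variable mixed sums of the shape $\sum_t\chi(t)\eta(t^2-1)\psi(ct)$ bounded by $3\sqrt q$ via Weil's theorem, plus two exceptional eigenvectors of the form $(z,1,\dots,1)$ whose eigenvalues satisfy explicit quadratics; multiplying by $|S_{y^2}|=\sqrt q$ then gives the required bound $3q$. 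This also shows your forecast of the hard part is off target: the sums that appear are not genuinely multivariable and no Kloosterman- or Sali\'e-type evaluations are needed --- the delicate point is producing the eigenvectors of the $q$-dimensional representations, not estimating exotic exponential sums.
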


For each fixed $q$, the graph $D(6,q)$ is isomorphic to $q$ copies of $D(5,q)$, see \cite{LU, LW}. Hence, the following corollary immediately follows.   

\begin{cor}\label{Main_Corollary}
Let $q = p^e$, where $p$ is an odd prime and $e$ is a positive integer. Then the second largest eigenvalue of $CD(6,q)$ is less than or equal to $2\sqrt{q}$. 
\end{cor}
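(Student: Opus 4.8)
The plan is to use the reduction recalled in the excerpt: since $CD(5,q)$ is bipartite and $4$-cycle-free, it suffices to show that the second largest eigenvalue of one of its halved graphs is at most $3q$. Writing $N$ for the point--line incidence matrix, the halved graph on the point set has adjacency matrix $NN^{\top}-qI$, so the task is to prove that every eigenvalue of $NN^{\top}$ other than the principal eigenvalue $q^{2}$ is at most $4q$. I would first make the point--line--point step explicit. Writing a point as $(a,b,c,d,e)$ and a line as $[t,B,C,D,E]$, in suitable coordinates the incidence relations of $D(5,q)$ read $B-b=ta$, $C-c=Ba$, $D-d=tb$, $E-e=tc$; following a point to a neighbouring line with first coordinate $t$ and back to a point with first coordinate $a+s$ yields the closed form
\[
(a,b,c,d,e)\longmapsto\big(a+s,\ b-ts,\ c-s(b+ta),\ d+t^{2}s,\ e+ts(b+ta)\big),
\]
so that $NN^{\top}$ acts on $\C[\F_q^{5}]$ by summing this map over the $q^{2}$ parameters $(t,s)\in\F_q^{2}$.

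The coordinates $c,d,e$ enter the step map only through their increments, which depend on $(a,b)$ but not on $c,d,e$ themselves; this is the signature of a class-two unipotent group law and lets me diagonalise in those three directions by a partial Fourier transform. Fixing an additive character $\psi$ of $\F_q$ and frequencies $(\gamma,\delta,\epsilon)\in\F_q^{3}$, the space of functions $\psi(\gamma c+\delta d+\epsilon e)\,h(a,b)$ is $NN^{\top}$-invariant, and on it the operator becomes $T_{\gamma,\delta,\epsilon}$ acting on $h\in\C[\F_q^{2}]$ by
\[
(T_{\gamma,\delta,\epsilon}h)(a,b)=\sum_{t,s\in\F_q}\psi\!\big(s\,Q(t;a,b)\big)\,h(a+s,\,b-ts),\qquad Q(t;a,b)=(\delta+\epsilon a)t^{2}+(\epsilon b-\gamma a)t-\gamma b .
\]
The spectrum of $NN^{\top}$ is thus the union, over the $q^{3}$ triples $(\gamma,\delta,\epsilon)$, of the spectra of these $q^{2}$-dimensional operators; this is exactly the decomposition of the Cayley graph into isotypic components for the irreducible representations of the underlying group, with $T_{\gamma,\delta,\epsilon}$ playing the role of $\sum_{s\in S}\rho(s)$ on a block with central character $(\gamma,\delta,\epsilon)$. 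The degenerate triples with $(\gamma,\epsilon)=(0,0)$ are abelian: there $T$ is a genuine convolution, is diagonalised by the remaining additive characters, its eigenvalues (including the principal $q^{2}$) are elementary Gauss sums, and I would bound the non-principal ones directly well below $4q$.

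The main case is $(\gamma,\epsilon)\neq(0,0)$, and here lies the heart of the argument. For $s\neq 0$ the substitution $w=b-ts$ turns the inner sum into a quadratic (chirp) kernel
\[
\sum_{w}\psi\!\Big(\tfrac{(\delta+\epsilon a)(b-w)^{2}}{s}+(\epsilon b-\gamma a)(b-w)-\gamma b\,s\Big)\,h(a+s,w),
\]
whose summation in $w$, after completing the square (this is where $q$ odd is used, both to invert $2$ and to evaluate the Gauss sum), contributes a quadratic Gauss sum of modulus $\sqrt q$. Carrying out the remaining Fourier transform in $b$ diagonalises the quadratic part and leaves a one-dimensional sum over the step $s$ of the shape $\sum_{s}\psi(\alpha s+\beta/s)$, a Kloosterman sum, multiplied by unimodular Gaussian factors. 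Invoking the Weil estimates, namely $|\text{Gauss sum}|=\sqrt q$ and $|\text{Kloosterman sum}|\le 2\sqrt q$, then controls every eigenvalue of $T_{\gamma,\delta,\epsilon}$: the Kloosterman term supplies the dominant fluctuation of size $2q$, which together with the main term of size $q$ gives the bound $3q$ for the halved graph, equivalently $\le 4q$ for $NN^{\top}$. This is precisely the required estimate, and it explains why the constant in $2\sqrt q$ is inherited from the Kloosterman constant $2$.

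I expect the decisive obstacle to be the careful diagonalisation of this family of metaplectic chirp operators and the exact bookkeeping of the resulting Gauss and Kloosterman sums. Two points demand particular attention. First, the sub-fibres where the leading coefficient $\delta+\epsilon a$ vanishes (and the parallel case $\epsilon=0,\ \gamma\neq0$, where $Q$ is linear in $t$) must be isolated: there the $w$-summation degenerates from a Gauss sum into a linear sum that collapses to a support condition, so these contributions have to be estimated separately and shown not to exceed the bound. Second, the constant must be pinned down to exactly $3q$ rather than merely $O(q)$: a crude Hilbert--Schmidt estimate only gives $\sum_i|\lambda_i|^{2}\approx 2q^{4}$ per block, far too weak, so the sharp cancellation furnished by the Weil bounds is essential, and since the target $2\sqrt q$ is tight at the level of those bounds there is essentially no slack to absorb errors in the casework.
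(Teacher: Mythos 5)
The decisive problem is that your proposal never addresses the graph in the statement. The corollary is about $CD(6,q)$, yet nowhere in your argument does $D(6,q)$ or $CD(6,q)$ appear: everything you write concerns $CD(5,q)$ and its halved (point collinearity) graph, i.e.\ you are attempting to re-prove Theorem \ref{Main_Theorem}, not the corollary. The paper's proof of the corollary is a one-line deduction from Theorem \ref{Main_Theorem} together with a structural fact you omit entirely: for each fixed $q$, the graph $D(6,q)$ is isomorphic to $q$ disjoint copies of $D(5,q)$ (see \cite{LU, LW}). Since all components of these edge-transitive graphs are isomorphic, this gives $CD(6,q) \cong CD(5,q)$, and since the spectrum of a disjoint union is the union of the spectra of the components, $\lambda_2(CD(6,q)) = \lambda_2(CD(5,q)) \leq 2\sqrt{q}$. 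Without this component identification, no conclusion about $CD(6,q)$ follows from anything in your write-up, no matter how the $D(5,q)$ analysis turns out.

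A secondary remark, in case your intent was to prove the $k=5$ bound from scratch and then (implicitly) transfer it: your setup is sound and genuinely parallels the paper. The point--line--point step map, the invariant subspaces $\psi(\gamma c + \delta d + \epsilon e)h(a,b)$, the operators $T_{\gamma,\delta,\epsilon}$ with the phase $Q(t;a,b) = (\delta+\epsilon a)t^2 + (\epsilon b - \gamma a)t - \gamma b$, and the reduction to bounding non-principal eigenvalues of $NN^{\top}$ by $4q$ are all correct; your $q^2$-dimensional blocks are exactly sums of the paper's $q$-dimensional irreducible blocks $M_{\alpha,\beta,\gamma}(S)$ and $N_{\tau,\mu}(S)$. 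But the analytic heart is asserted, not proved, and its claimed structure is wrong: because the coefficients $\delta + \epsilon a$ and $\epsilon b - \gamma a$ move with the shift $a \mapsto a+s$, the operator does not factor into a chirp in $b$ times a convolution in $s$, and no Kloosterman sum $\sum_s \psi(\alpha s + \beta/s)$ with the bound $2\sqrt{q}$ appears. What actually happens (Sections \ref{MMatrix} and \ref{NMatrix} of the paper) is that, after conjugating by an explicit diagonal matrix, the blocks have $q-2$ eigenvectors $v_\chi$ given by multiplicative characters plus two exceptional eigenvectors $(z,1,\dots,1)$, and the eigenvalues are a quadratic Gauss sum of modulus $\sqrt{q}$ times mixed multiplicative--additive sums such as $\sum_{t\neq -1}\chi(t)\eta(t^2-1)\psi\bigl(c(t-1)/(t+1)\bigr)$, which are bounded by $3\sqrt{q}$ using Weil's bound for sums $\sum_x \chi(f(x))\psi(g(x))$ --- not the Gauss-plus-Kloosterman bookkeeping you describe. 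You flag this diagonalisation as ``the decisive obstacle,'' which is accurate: it is precisely the part of the argument that your proposal leaves unresolved.
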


Our approach to prove Theorem \ref{Main_Theorem} is along the same line as that of \cite{CLL,MSW}. In particular, we find a group $G$ and generating set $S \subset G$ such that one of the halved graphs of $D(5, q)$, is isomorphic to Cay$(G, S)$. It is well known that the representation theory of groups is an important tool for studying the spectra of Cayley graphs, see \cite{Babai}.
With this in mind, we find all the irreducible representations of $G$, which reduces the problem of finding the eigenvalues of the point graph to finding eigenvalues of particular $q \times q$ matrices obtained from sums of irreducible representations of $G$. We determine the eigenvalues of these matrices explicitly as character sums. Using the well established theory of characters over finite fields and Weil's bound for character sums, we are able to prove our theorem. Similar ideas have been used in Cao, et. al. \cite{CLWW} and Cioab\u{a}, Lazebnik, and Sun \cite{CLS} but for different families of graphs. In \cite{CLWW,CLL} the group turned out to be abelian, while in \cite{MSW} it was non-abelian for odd $q$. Note that, if $G$ is an abelian group, then any Cayley graph of $G$ has a diagonlizable adjacency matrix, making the problem of finding the spectrum somewhat simpler. When $G$ is non-abelian, the problem can become considerably more difficult depending on the irreducible representations of $G$. 

The rest of the paper is organized as follows. In Section \ref{Section_point_graph} we give an explicit description of a group $G$ and generating set $S$ such that one of the halved graphs of $D(5,q)$ is isomorphic to $\text{Cay}(G,S)$. In Section \ref{Section_background} we collect necessary results about finite fields and characters over finite fields to use them. In Section \ref{Section_irr_rep} we compute all the irreducible representations of the group $G$, and then describe the $q\times q$ matrices whose spectrum we need to compute. Finally, in Sections \ref{MMatrix} and \ref{NMatrix} we compute their spectrum. Moreover, we prove that the eigenvalues are upper bounded by $3q$ which in turn completes the proof of Theorem \ref{Main_Theorem}.

\section{The graphs \boldmath{$D(5, q)$} and their point collinearity graphs}\label{Section_point_graph}
Let $q = p^e$, where $p$ is a prime and $e$ a positive integer. Define a graph $D(5, q)$ to be a bipartite graph where each part has $q^5$ vertices and represented by a copy of $5$-dimensional vector space $\F_q^5$. Vertices in one part are called as \textit{lines}, and the other part as \textit{points}. We write $[\ell] = [\ell_1, \ell_2, \ell_3, \ell_4, \ell_5]$ to denote a vertex that is a line, and $(p) = (p_1, p_2, p_3, p_4, p_5)$ to denote a vertex that is a point. The adjacency relation between a point $(p)$ and a line $[\ell]$ is given by
\begin{align*}
    p_2 + \ell_2 = p_1\ell_1, \hspace{1cm}
    p_3 + \ell_3 = p_1\ell_2, \hspace{1cm} 
    p_4 + \ell_4 = p_2\ell_1, \hspace{1cm}
    p_5 + \ell_5 = p_3\ell_1.
\end{align*}

\begin{lem}
Let $q = p^e$, where $p$ is an odd prime and $e$ is a positive integer. Further, let $\Gamma(q)$ be a bipartite graph with the same parts as that of $D(5, q)$ and adjacency relation defined by 
\begin{align*}
    p_2 + \ell_2 = p_1\ell_1, \hspace{1cm}
    p_3 + \ell_3 = p_1\ell_1^2, \hspace{1cm}
    p_4 + \ell_4 = p_1^2\ell_1, \hspace{1cm}
    p_5 + \ell_5 = p_1^2\ell_1^2.
\end{align*}
Then $\Gamma(q)$ is isomorphic to $D(5, q)$.
\end{lem}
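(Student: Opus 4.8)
The plan is to construct an explicit pair of bijections, one on the set of points and one on the set of lines, that carries the adjacency relation of $D(5,q)$ to that of $\Gamma(q)$. Since both graphs are bipartite with the same partite sets $\F_q^5$ and both are $q$-regular, it suffices to produce bijections $\phi,\psi\colon\F_q^5\to\F_q^5$, acting on points and on lines respectively, such that whenever a point $(p)$ and a line $[\ell]$ satisfy the four defining equations of $D(5,q)$, their images satisfy the four defining equations of $\Gamma(q)$. Bijectivity together with edge-preservation and $q$-regularity then forces $\phi\times\psi$ to be a graph isomorphism. Throughout I would work on the edge variety, where all four $D(5,q)$ relations hold simultaneously, so that $\ell_2,\ell_3,\ell_4,\ell_5$ are determined by $p_1,\dots,p_5,\ell_1$ and every quantity is a function of these free parameters.

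First I would look for $\phi$ and $\psi$ in triangular form, fixing the first two coordinates by $\phi(p)_1=p_1$, $\phi(p)_2=p_2$ and $\psi(\ell)_1=\ell_1$, $\psi(\ell)_2=\ell_2$, so that the relation $p_2+\ell_2=p_1\ell_1$ is preserved verbatim. Using $p_2+\ell_2=p_1\ell_1$ one rewrites $p_2\ell_1+\ell_1\ell_2=\ell_1(p_2+\ell_2)=p_1\ell_1^2$ and $p_3+p_1p_2+\ell_3=p_1(\ell_2+p_2)=p_1^2\ell_1$ via the second and third $D(5,q)$ relations; both already split into a pure point-function plus a pure line-function. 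This suggests setting the new third and fourth point coordinates to $p_4$ and $p_3+p_1p_2$, and the corresponding line coordinates to $\ell_4+\ell_1\ell_2$ and $\ell_3$, which realize the $\Gamma(q)$ relations $P_3+L_3=P_1L_1^2$ and $P_4+L_4=P_1^2L_1$ (here $P_i,L_i$ denote the image coordinates).

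The main obstacle is the last relation. Starting from $p_5+\ell_5=p_3\ell_1$ and attempting to reach $P_5+L_5=p_1^2\ell_1^2$ by an additive shift $P_5=p_5+a(p)$, $L_5=\ell_5+b(\ell)$ forces $a(p)+b(\ell)=p_1^2\ell_1^2-p_3\ell_1$, which, after repeatedly applying the first relation, reduces to $p_2^2+p_2\ell_2+\ell_1\ell_3$. The cross term $p_2\ell_2$ is genuinely bilinear and cannot be written as a pure point-function plus a pure line-function, so a shift with coefficient $1$ on $p_5,\ell_5$ is impossible. The resolution is to allow a factor of two and complete the square: I would verify that $P_5=2p_5+p_2^2$ and $L_5=2\ell_5+2\ell_1\ell_3-\ell_2^2$ give $P_5+L_5=(p_2+\ell_2)^2=p_1^2\ell_1^2$ on the edge variety. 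This is precisely where the hypothesis that $q$ is odd enters, since the map $p_5\mapsto 2p_5+p_2^2$ is a bijection only when $2$ is invertible.

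Finally I would record the two maps explicitly, check that each is a bijection of $\F_q^5$ by exhibiting the inverse, recovering $p_3=P_4-P_1P_2$ and $p_5=(P_5-P_2^2)/2$ on the point side and $\ell_4=L_3-L_1L_2$ and $\ell_5=(L_5-2L_1L_4+L_2^2)/2$ on the line side, where the divisions by $2$ again use $q$ odd, and confirm directly that the four $D(5,q)$ equations imply the four $\Gamma(q)$ equations under $\phi\times\psi$. Since $\phi$ and $\psi$ are bijections carrying the $q$ neighbors of each point to $q$ distinct neighbors of its image, edge-preservation in one direction together with $q$-regularity yields that $\phi\times\psi$ is a graph isomorphism, establishing that $\Gamma(q)\cong D(5,q)$.
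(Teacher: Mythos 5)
Your proposal is correct and is essentially the paper's own proof: the maps $\phi$ and $\psi$ you derive are exactly the paper's isomorphism $\pi$ (point $(p_1,p_2,p_3,p_4,p_5)\mapsto(p_1,p_2,p_4,p_3+p_1p_2,2p_5+p_2^2)$, line $[\ell]\mapsto[\ell_1,\ell_2,\ell_4+\ell_1\ell_2,\ell_3,2\ell_5+2\ell_1\ell_3-\ell_2^2]$), verified the same way by checking the four relations on edges and exhibiting inverses. Incidentally, your inverse formula $p_5=(P_5-P_2^2)/2$ is the correct one; the paper's printed preimage $2^{-1}p_5-p_2^2$ appears to contain a typo for $2^{-1}(p_5-p_2^2)$.
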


\begin{proof}
Define the map $\pi:D(5, q) \rightarrow \Gamma(q)$ by
\begin{align*}
    \pi((p_1, p_2, p_3, p_4, p_5)) &= (p_1', p_2', p_3', p_4', p_5') = (p_1, p_2, p_4, p_3 + p_1p_2, 2p_5 + p_2^2) \\
    \pi([\ell_1, \ell_2, \ell_3, \ell_4, \ell_5]) &= [\ell_1', \ell_2', \ell_3', \ell_4', \ell_5'] = [\ell_1, \ell_2, \ell_4 + \ell_1\ell_2, \ell_3, 2\ell_5 + 2\ell_1\ell_3 - \ell_2^2].
\end{align*}

To see that $\pi$ is a bijection, it is enough to show that it is a surjection. Let $(p) = (p_1, p_2, p_3, p_4, p_5)$ be in the image of $\pi$. Then observe that $\pi((p_1, p_2, p_4 - p_1p_2, p_3, 2^{-1}p_5 - p_2^2)) = (p)$ by definition. Likewise, for any $[\ell] = [\ell_1, \ell_2, \ell_3, \ell_4, \ell_5]$, we have that $\pi([\ell_1, \ell_2, \ell_4 - \ell_1\ell_2, \ell_3, 2^{-1}\ell_5 - 2\ell_1(\ell_4 - \ell_1\ell_2) + \ell_2^2]) = [\ell]$. We now confirm that $\pi$ is an isomorphism by verifying that the adjacency relations are preserved:
\begin{align*}
     p_2' + \ell_2' &= p_2 + \ell_2 = p_1\ell_1 = p_1'\ell_1' \\
     p_3' + \ell_3' &= p_4 + \ell_4 + \ell_1\ell_2 = p_2\ell_1 + \ell_1\ell_2 = (p_1\ell_1 - \ell_2)\ell_1 + \ell_1\ell_2 = p_1'(\ell_1')^2 \\ 
     p_4' + \ell_4' &= p_3 + p_1p_2 + \ell_3 = p_1\ell_2 + p_1p_2 = p_1(p_1\ell_1 - p_2) + p_1p_2 = (p_1')^2\ell_1' \\
     p_5' + \ell_5' &= 2(p_5 + \ell_5) + p_2^2  + 2\ell_1\ell_3 - \ell_2^2 
     = 2(p_3\ell_1) + p_2^2 + 2\ell_1\ell_3 - \ell_2^2 \\
     &= 2\ell_1(p_3 + \ell_3) + p_2^2 - \ell_2^2 = 2\ell_1(p_4' - p_1p_2 + \ell_4') + p_2^2 - \ell_2^2   \\
     &= 2\ell_1'((p_1')^2\ell_1' - p_1p_2) + p_2^2 - \ell_2^2  = 2(p_1'\ell_1')^2 - 2p_1'\ell_1'p_2' + (p_2')^2 - (\ell_2')^2 \\
     &= 2(p_1'\ell_1')^2 - 2(p_2' + \ell_2')p_2' + (p_2')^2 - (\ell_2')^2 = 2(p_1'\ell_1')^2 - (p_2' + \ell_2')^2 = (p_1'\ell_1')^2
\end{align*}
Thus $D(5, q)$ is isomorphic to $\Gamma(q)$.
\end{proof}

As a result of the lemma, we work with $\Gamma(q)$ instead of $D(5, q)$. Let $\Gamma^{(2)}(q)$ be the halved graph on the set of points. In other words, the vertex set of $\Gamma^{(2)}(q)$ is the set of points of $\Gamma(q)$ and two vertices $(p)$ and $(r)$ are adjacent in $\Gamma^{(2)}(q)$ if and only if there exists an $[\ell]$ in $\Gamma(q)$ such that $[\ell]$ is adjacent to both $(p)$ and $(r)$ in $\Gamma(q)$. Using the adjacency relations of $\Gamma(q)$, we see that $(p)$ is adjacent to $(r)$ in $\Gamma^{(2)}(q)$ if and only if there exists an $\ell_1 \in \F_q$ such that following system of equations is satisfied
\begin{align}\label{GraphEqs}
    p_2 - r_2 = (p_1-r_1)\ell_1, \hspace{0.1cm}  
    p_3 - r_3 = (p_1 - r_1)\ell_1^2, \hspace{0.1cm} 
    p_4 - r_4 = (p_1^2 - r_1^2)\ell_1, \hspace{0.1cm} 
    p_5 - r_5 = (p_1^2 - r_1^2)\ell_1^2
\end{align}

We call the graph $\Gamma^{(2)}(q)$, the \textit{point collinearity graph} of $\Gamma(q)$, or \textit{point graph} for short. It is easy to see from (\ref{GraphEqs}) that the condition that $(p)$ is distinct from $(r)$ is equivalent to $p_1 \neq r_1$. Furthermore, it is known that the girth of $D(5, q)$ is 10. So $\Gamma^{(2)}(q)$ is a simple graph with no loops.

We want to show that the graph $\Gamma^{(2)}(q)$ is actually a Cayley graph, and for that we need a group. So we define a group $G = (\mathbb{F}_q^5,\cdot)$ with the binary operation as follows:
\begin{align} \label{GroupOp}
    X\cdot Y = (x_1+y_1,x_2+y_2,x_3+y_3,x_4+y_4+2x_1y_2,x_5+y_5+2x_1y_3),
\end{align}
where $X= (x_1,x_2,x_3,x_4,x_5)$ and $Y = (y_1,y_2,y_3,y_4,y_5)$. Note that $G$ is a non-abelian group with $
X^{-1} = (-x_1, -x_2, -x_3, -x_4 + 2x_1x_2, -x_5 +2x_1x_3).$ Later on we will use $XY$ instead of $X\cdot Y$ whenever it is clear from the context. 

\begin{lem}
Let $G$ be defined as above and $
S = \{ (x, xa, xa^2, x^2a, x^2a^2) : a, x \in \F_q, x\neq 0\}
$. Then $S = S^{-1}$ and $\Gamma^{(2)}(q)$ is an undirected Cayley graph of $G$ with generating set $S$, implying $\Gamma^{(2)}(q) = \text{Cay}(G, S)$.
\end{lem}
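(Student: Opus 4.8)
The plan is to prove the two assertions separately, both by direct computation with the group law \eqref{GroupOp} and its inverse formula, after identifying a point $(p) = (p_1,\dots,p_5)$ of $\Gamma^{(2)}(q)$ with the group element $P = (p_1,\dots,p_5) \in G$.

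First I would check $S = S^{-1}$. Taking a typical element $s = (x, xa, xa^2, x^2a, x^2a^2)$ with $x \neq 0$ and applying the stated inverse formula $X^{-1} = (-x_1,-x_2,-x_3,-x_4+2x_1x_2,-x_5+2x_1x_3)$, the fourth and fifth coordinates simplify as $-x^2a + 2x\cdot xa = x^2 a$ and $-x^2a^2 + 2x\cdot xa^2 = x^2 a^2$. Hence $s^{-1} = (-x, (-x)a, (-x)a^2, (-x)^2a, (-x)^2a^2)$, which is again of the form defining $S$, now with parameters $-x \neq 0$ and $a$. This gives $s^{-1} \in S$ and therefore $S = S^{-1}$, which will make the Cayley graph automatically undirected.

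The heart of the argument is to show that adjacency in $\Gamma^{(2)}(q)$ coincides with the Cayley adjacency $R^{-1}P \in S$. I would compute the group difference $R^{-1}P$ explicitly: combining the inverse formula with \eqref{GroupOp} yields
\[
R^{-1}P = \bigl(p_1-r_1,\ p_2-r_2,\ p_3-r_3,\ (p_4-r_4)-2r_1(p_2-r_2),\ (p_5-r_5)-2r_1(p_3-r_3)\bigr),
\]
where the twisting terms $-2r_1(p_2-r_2)$ and $-2r_1(p_3-r_3)$ are produced by the non-abelian part of the product. Setting $x = p_1 - r_1$ and $a = \ell_1$, I would then show that the system \eqref{GraphEqs} holds for some $\ell_1$ if and only if $R^{-1}P$ has the parametric form defining $S$. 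In the forward direction the relations $p_2-r_2 = xa$ and $p_3-r_3 = xa^2$ match the first coordinates at once, while for the last two one uses $p_4-r_4 = (p_1^2-r_1^2)\ell_1 = x(p_1+r_1)a$ together with the factorization $p_1^2 - r_1^2 = (p_1-r_1)(p_1+r_1)$; the cancellation $x(p_1+r_1)a - 2r_1 x a = x(p_1-r_1)a = x^2a$ is exactly what turns the fourth coordinate into $x^2a$, and similarly the fifth becomes $x^2a^2$. The converse direction simply reverses these identities.

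The one step requiring care — and the reason the group $G$ is defined with precisely the twisting terms $2x_1y_2$ and $2x_1y_3$ — is this cancellation: the correction terms $-2r_1(p_2-r_2)$ and $-2r_1(p_3-r_3)$ must combine with the adjacency relations involving $p_1^2-r_1^2$ so as to convert the factor $(p_1+r_1)$ into $(p_1-r_1)$, producing the square $x^2$ demanded by the last two coordinates of $S$. Once this equivalence is verified in both directions, adjacency of $(p)$ and $(r)$ in $\Gamma^{(2)}(q)$ is equivalent to $P = Rs$ for some $s = R^{-1}P \in S$, which is exactly the definition of an edge in $\text{Cay}(G,S)$; combined with $S = S^{-1}$, this identifies $\Gamma^{(2)}(q)$ with the undirected Cayley graph $\text{Cay}(G,S)$.
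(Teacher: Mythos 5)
Your proposal is correct and takes essentially the same approach as the paper: the paper likewise verifies $S = S^{-1}$ directly from the inverse formula, and then rewrites the adjacency system \eqref{GraphEqs} with $x = p_1 - r_1$, $a = \ell_1$ as $p_4 = r_4 + x^2a + 2xar_1$ and $p_5 = r_5 + x^2a^2 + 2xa^2r_1$, which exhibits $(p) = (r)\cdot s$ with $s \in S$ --- the right-multiplication mirror of your computation that $R^{-1}P \in S$, hinging on the same cancellation $(p_1+r_1) - 2r_1 = p_1 - r_1$. The only point to make explicit (which the paper does in one sentence) is that adjacency forces $p_1 \neq r_1$, so that $x \neq 0$ as required by the parametrization of $S$.
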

\begin{proof} If $X = (x,xa,xa^2,x^2a,x^2a^2)$, then $X^{-1} = (-x,-xa,-xa^2,x^2a,x^2a^2)$. Hence,
the first assertion follows immediately. For the second, fix a vertex $(r)$ in $\Gamma^{(2)}(q)$ and consider any point $(p)$ adjacent to $(r)$. Let $p_1 - r_1 = x$ and $\ell_1 = a$. Since any two adjacent vertices in $\Gamma^{(2)}(q)$ must have distinct first coordinate, then $x \neq 0$. We may rewrite the equations in (\ref{GraphEqs}) to say that $(r)$ is adjacent to $(p)$ if and only if there exists an $a, x \in \F_q$, with $x \neq 0$, such that 
\begin{align*}\label{PointGraphEqs}
     &p_2  = r_2 + xa  \nonumber \\
    &p_3  = r_3 + xa^2 \nonumber  \\
    &p_4  = r_4 + x(x + 2r_1)a  = r_4 + x^2a +2xar_1  \\
    &p_5  = r_ 5 + x(x+2r_1)a^2 = r_5 + x^2a^2 + 2xa^2r_1. \qedhere 
\end{align*} 
\end{proof}
\noindent We now collect some results about finite fields and characters which will be needed in proving Theorem \ref{Main_Theorem}.

\section{Background on finite fields and characters}\label{Section_background}

Let $\F_q$ be a finite field, where $q = p^e$, and $\zeta = e^{2\pi i/p}$. The trace map is defined as
$$
tr: \F_q \rightarrow \F_p, \hs tr(a) = a + a^p + \cdots + a^{p^{e-1}}.
$$

\begin{lem} \cite[Ch.\ 5]{LN}
For every polynomial of the form $f(x) = bx+c \in \mathbb{F}_q[x]$ we have
\begin{align*}
\sum_{a \in \mathbb{F}_q} \zeta^{tr(f(a))} = \begin{cases}
0, &\text{if } b\neq 0;\\
q\zeta^{tr(c)}, &\text{otherwise}. 
\end{cases} 
\end{align*} 
\end{lem}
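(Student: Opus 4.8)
The plan is to split into the two cases exactly as in the statement, handling the trivial case first. When $b = 0$, the polynomial $f(a) = c$ is constant in $a$, so every summand equals $\zeta^{tr(c)}$; summing over the $q$ elements of $\F_q$ then immediately gives $q\zeta^{tr(c)}$, with no further work needed.

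The substance lies in the case $b \neq 0$. The first step is a change of variables: since $b \neq 0$, the affine map $a \mapsto ba + c$ is a bijection of $\F_q$, so as $a$ ranges over $\F_q$ the argument $f(a)$ ranges over all of $\F_q$ as well. Hence the sum collapses to $\sum_{u \in \F_q} \zeta^{tr(u)}$, and the entire problem reduces to showing that this unweighted exponential sum vanishes.

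To establish the vanishing, I would exploit that $u \mapsto \zeta^{tr(u)}$ is a nontrivial character of the additive group $(\F_q, +)$. The key input is that the trace map $tr : \F_q \to \F_p$ is a surjective (hence nontrivial) $\F_p$-linear map, so there exists $d \in \F_q$ with $tr(d) \neq 0$. Setting $T = \sum_{u \in \F_q} \zeta^{tr(u)}$ and using additivity of the trace together with the reindexing $u \mapsto u + d$, one obtains $\zeta^{tr(d)} T = \sum_{u \in \F_q} \zeta^{tr(u+d)} = T$, so that $(\zeta^{tr(d)} - 1)T = 0$. Since $tr(d) \in \{1, \dots, p-1\}$ and $\zeta = e^{2\pi i/p}$ is a primitive $p$-th root of unity, we have $\zeta^{tr(d)} \neq 1$, forcing $T = 0$. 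Alternatively, grouping the $u$ by the value $j = tr(u) \in \F_p$, surjectivity and $\F_p$-linearity of the trace make each fiber have the same size $q/p$, whence $T = (q/p)\sum_{j=0}^{p-1} \zeta^{j} = 0$ as a sum over all $p$-th roots of unity.

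The only genuinely nontrivial ingredient is the surjectivity (equivalently, nontriviality) of the trace map, which is precisely what guarantees that $u \mapsto \zeta^{tr(u)}$ is a nontrivial character; everything else is a linear change of variables and the standard orthogonality of characters. Since this fact is classical and already available from the cited reference \cite{LN}, I expect the main obstacle here to be mere bookkeeping rather than any real difficulty.
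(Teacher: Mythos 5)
Your proof is correct: the case $b=0$ is trivial, and for $b\neq 0$ the change of variables $u = ba+c$ together with the nontriviality of the additive character $u \mapsto \zeta^{tr(u)}$ (via surjectivity of the trace and the shift argument $\zeta^{tr(d)}T = T$) is a complete and valid argument. The paper offers no proof of its own here — it simply cites Lidl--Niederreiter — and your argument is exactly the standard one found in that reference, so there is nothing substantive to compare beyond noting that you have supplied the omitted details correctly.
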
 
Let $a \in \F_q^*$ be a primitive element, and $\zeta_{q-1} = e^{2 \pi i/(q-1)}$. Let $k \in \{ 0, 1, \dots, q-1  \}$. The multiplicative character, or just character, $\chi_k : \F_q \rightarrow \C$ is defined by
$$
\chi_k(a^j) = \zeta_{q-1}^{kj}
$$
and $\chi_k(0) = 0$, except when $k = 0$, in which case $\chi_0(0) = 1$.
We denote by $\widehat{\F_q^*}$ the group of all multiplicative characters on $\F_q^*$. The quadratic character $\chi_{(q-1)/2}$ will appear often, and so we will denote it by $\eta$. 

For any $f \in \F_q[y]$, we denote by $S_{f}$ the following exponential sum
$$
S_{f} = \sum_{a \in \F_q} \zeta^{tr(f(a))}.
$$
The following lemma can be found in many places, see, e.g., \cite[Th.\ 5.15 and Th.\ 5.33]{LN}.
\begin{lem}\label{square}
Let $q = p^e$ be an odd prime power and $\zeta = e^{2 \pi i/ p}$. If $g \in \F_q^*$ and $f(y) = gy^2$, then
$$
S_{f} = \sum_{a \in \F_q}\zeta^{tr(ga^2)} = \left\{ \begin{array}{ll}
  \eta(g)(-1)^{e-1} \sqrt{q},  & p \equiv 1 \text{(mod 4)} \\
   \eta(g)(-i)^{e+2} \sqrt{q}, & p \equiv 3 \text{(mod 4)}
\end{array}  \right\} = \eta(g)S_{y^2}.
$$
\end{lem}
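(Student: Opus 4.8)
The plan is to reduce the evaluation of $S_f$ to the classical quadratic Gauss sum and then invoke its known value. First I would rewrite the sum over $a \in \F_q$ as a sum over $c = a^2$. Since $q$ is odd, for each $c \in \F_q$ the number of $a \in \F_q$ with $a^2 = c$ equals $1 + \eta(c)$ (one solution when $c = 0$, because $\eta(0) = 0$; two when $c$ is a nonzero square; and none otherwise). Hence
\begin{align*}
S_f = \sum_{a \in \F_q} \zeta^{tr(ga^2)} = \sum_{c \in \F_q} (1 + \eta(c)) \zeta^{tr(gc)} = \sum_{c \in \F_q} \zeta^{tr(gc)} + \sum_{c \in \F_q} \eta(c) \zeta^{tr(gc)}.
\end{align*}
The first sum vanishes since $g \neq 0$, by the preceding lemma on linear exponential sums. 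For the second, substituting $c = g^{-1}d$ and using that $\eta$ is the quadratic character (so that $\eta(g^{-1}) = \eta(g)$) gives
\begin{align*}
\sum_{c \in \F_q} \eta(c) \zeta^{tr(gc)} = \eta(g) \sum_{d \in \F_q} \eta(d) \zeta^{tr(d)} = \eta(g)\, G(\eta),
\end{align*}
where $G(\eta) = \sum_{d \in \F_q} \eta(d) \zeta^{tr(d)}$ is the Gauss sum of $\eta$. Taking $g = 1$ shows $S_{y^2} = G(\eta)$, and therefore $S_f = \eta(g) S_{y^2}$, which is the last equality of the statement.

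It then remains to compute $G(\eta) = S_{y^2}$ explicitly. Its absolute value is $\sqrt{q}$ by the general bound on Gauss sums of nontrivial characters, \cite[Th.\ 5.15]{LN}. The precise complex value follows from the classical evaluation of the quadratic Gauss sum over $\F_q$, \cite[Th.\ 5.33]{LN}, which distinguishes the cases $p \equiv 1$ and $p \equiv 3 \pmod 4$ and records the dependence on the degree $e$. Substituting these values yields $\eta(g)(-1)^{e-1}\sqrt{q}$ when $p \equiv 1 \pmod 4$ and $\eta(g)(-i)^{e+2}\sqrt{q}$ when $p \equiv 3 \pmod 4$, completing the proof.

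The main obstacle is the explicit determination of the sign and argument of $G(\eta)$: establishing $|G(\eta)| = \sqrt{q}$ is routine, but pinning down its exact value requires the full classical Gauss sum evaluation (historically the delicate sign problem over prime fields, lifted to $\F_q$ via the Davenport--Hasse relation). Since this is precisely the content of \cite[Th.\ 5.33]{LN}, I would cite it rather than reprove it; all remaining steps are the elementary square-counting reduction carried out above, so the work specific to this lemma is essentially the identity $S_f = \eta(g) S_{y^2}$.
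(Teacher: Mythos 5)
Your proposal is correct, but note that the paper contains no proof of this lemma at all: it is stated as a known result, with everything delegated to the citation \cite[Th.\ 5.15 and Th.\ 5.33]{LN} (in Lidl--Niederreiter, Theorem 5.33 reduces a quadratic exponential sum $\sum_{c}\psi(f(c))$ to $\eta(a_2)G(\eta)$ times a root of unity, and Theorem 5.15 evaluates the quadratic Gauss sum $G(\eta)$ exactly). Your argument re-derives the reduction step elementarily: counting solutions of $a^2 = c$ by $1+\eta(c)$, killing $\sum_{c}\zeta^{tr(gc)}$ via the linear-sum lemma, and substituting $c = g^{-1}d$ with $\eta(g^{-1}) = \eta(g)$ gives $S_f = \eta(g)\,G(\eta)$, after which $g=1$ identifies $G(\eta) = S_{y^2}$; only the exact value of $G(\eta)$ is then imported. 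This buys a nearly self-contained lemma at essentially no cost, since both routes rest on the same nontrivial input --- the determination of the argument of the quadratic Gauss sum, classically settled over $\F_p$ and lifted to $\F_q$ by Davenport--Hasse, exactly as you note. Two small corrections to your citations: the roles of the two theorems are the opposite of what you wrote --- Theorem 5.15 of \cite{LN} is the exact evaluation of the quadratic Gauss sum (the modulus statement $|G(\eta)| = \sqrt{q}$ is Theorem 5.11), while Theorem 5.33 is the reduction for quadratic polynomials. Finally, it is worth recording the identity $(-i)^{e+2} = (-1)^{e-1}i^{e}$, which confirms that the value you obtain for $p \equiv 3 \pmod 4$ agrees with the form displayed in the lemma.
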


\begin{lem} \label{LemSq}
Let $q = p^e$ be an odd prime power. Then
\[
\sum_{a \in \F_q^*}\eta(a^2 - 1) = \left\{\begin{array}{ll}
  -2,   &\mbox{if $q \equiv 1$(mod 4);} \\
  0,  &\mbox{if $q \equiv 3$(mod 4).}
\end{array}  \right.
\]
\end{lem}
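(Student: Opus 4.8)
The plan is to reduce the sum over $\F_q^*$ to a complete character sum over $\F_q$ and then peel off the single missing term $a = 0$. Since $\eta$ is a multiplicative character it satisfies $\eta(uv) = \eta(u)\eta(v)$, so $\eta(a^2-1) = \eta(a-1)\,\eta(a+1)$, both sides vanishing exactly when $a = \pm 1$. Hence I would first evaluate the complete sum $T := \sum_{a \in \F_q}\eta(a-1)\eta(a+1)$ and only afterwards correct for the excluded index $a = 0$.

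To compute $T$, I would substitute $b = a-1$ to get $T = \sum_{b \in \F_q}\eta(b)\eta(b+2)$. The term $b=0$ contributes $\eta(0)\eta(2) = 0$, so the sum may be restricted to $b \in \F_q^*$, where $\eta(b^2) = 1$ lets me write $\eta(b)\eta(b+2) = \eta(b^{-1})\eta(b+2) = \eta(1 + 2b^{-1})$. As $b$ runs over $\F_q^*$, the quantity $c = 1 + 2b^{-1}$ runs bijectively over $\F_q \setminus \{1\}$, so
\[
T = \sum_{c \neq 1}\eta(c) = \Bigl(\sum_{c \in \F_q}\eta(c)\Bigr) - \eta(1) = 0 - 1 = -1,
\]
using that a nontrivial multiplicative character sums to zero over $\F_q$ (with the convention $\eta(0)=0$). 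Alternatively one may simply quote the standard evaluation of a quadratic character sum (e.g.\ Lidl--Niederreiter, Theorem 5.48), which yields the same value $-\eta(1) = -1$ since the discriminant of $a^2-1$ is nonzero.

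Finally I would convert back: $\sum_{a \in \F_q^*}\eta(a^2-1) = T - \eta(0^2 - 1) = -1 - \eta(-1)$. The proof then closes by recalling the classical fact that $\eta(-1) = (-1)^{(q-1)/2}$, which equals $+1$ when $q \equiv 1 \pmod 4$ and $-1$ when $q \equiv 3 \pmod 4$. Substituting gives $-1-1 = -2$ and $-1-(-1) = 0$ respectively, exactly as claimed.

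I expect no serious obstacle here. The only points demanding care are the bookkeeping of the zero-valued terms (the convention $\eta(0) = 0$, which makes the factorization $\eta(a^2-1)=\eta(a-1)\eta(a+1)$ legitimate at $a = \pm 1$) and correctly recalling the sign $\eta(-1)$, which is the single external fact the whole argument rests on.
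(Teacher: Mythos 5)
Your proof is correct and its skeleton is the same as the paper's: evaluate the complete sum $\sum_{a \in \F_q}\eta(a^2-1) = -1$, peel off the excluded term $\eta(0^2-1) = \eta(-1)$, and split into cases according to $\eta(-1) = (-1)^{(q-1)/2}$. The only divergence is that the paper simply cites Lidl--Niederreiter Th.\ 5.48 for the value $-1$ of the complete sum, whereas your primary route derives it elementarily via the factorization $\eta(a^2-1)=\eta(a-1)\eta(a+1)$, the shift $b=a-1$, and the substitution $c = 1+2b^{-1}$ (valid since $\eta(b)^2=1$ for $b\neq 0$ and $2$ is invertible in odd characteristic), reducing everything to the vanishing of a nontrivial character sum; you also note the citation as an alternative, which is exactly what the paper does. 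Your version buys self-containedness at the cost of a few extra lines; the paper's buys brevity by outsourcing the complete-sum evaluation. Both are sound, and your bookkeeping of the $\eta(0)=0$ convention at $a=\pm 1$ is handled correctly.
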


\begin{proof}
By \cite[Th.\ 5.48]{LN}, we have
\begin{align}\label{sum}
\sum_{a \in \F_q}\eta(a^2 - 1) = -1.
\end{align}
If we exclude $a = 0$, then we subtract $\eta(-1)$ from both sides of (\ref{sum}). Now, $\eta(-1) = -1$ when $q \equiv 3$(mod 4) and $\eta(-1) = 1$ if $q \equiv 1$(mod 4).
\end{proof}

For the proofs of the following results, see \cite{Schmidt}.

\begin{lem}\label{Schlem}\cite[Ch.\ 1, Lem. 2C]{Schmidt}
Let $d$ be be a positive integer.
Suppose that $y^d - f(x) \in \F_q[x, y]$. Then the following are equivalent:
\begin{enumerate}
    \item $y^d - f(x)$ is absolutely irreducible (i.e., irreducible over the algebraic closure of $\F_q$);
    \item $y^d - cf(x)$ is absolutely irreducible for each $c \in \F_q^*$;
    \item
    if $f(x) = a(x-\alpha_1)^{d_1}\dots (x - \alpha_k)^{d_k}$, is the factorization of $f(x)$ in the algebraic closure of $\F_q$,  
    where $\alpha_i \neq \alpha_j$  (for $i \neq j$), then gcd$(d, d_1, d_2, \dots, d_k) = 1$.
\end{enumerate}
\end{lem}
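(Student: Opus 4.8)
This is a classical criterion for the absolute irreducibility of a superelliptic polynomial $y^d=f(x)$, and the plan is to reduce everything to the single equivalence $(1)\Leftrightarrow(3)$; the equivalence $(2)\Leftrightarrow(3)$ then comes for free. First I would record the reduction via Gauss's lemma: since $y^d-f(x)$ is monic, hence primitive, in $y$ over the UFD $\ov{\F_q}[x]$, it is irreducible in $\ov{\F_q}[x,y]$ if and only if it is irreducible in $K[y]$, where $K=\ov{\F_q}(x)$. Thus absolute irreducibility is exactly irreducibility of the binomial $y^d-\beta$ over the field $K$, with $\beta=f(x)$. I would also observe at the outset that replacing $f$ by $cf$ with $c\in\F_q^*$ only multiplies the leading coefficient $a$ by $c$ and leaves the multiset $\{d_1,\dots,d_k\}$ unchanged, so condition $(3)$ holds for $f$ if and only if it holds for $cf$. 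Consequently, once $(1)\Leftrightarrow(3)$ is established for an arbitrary polynomial of this shape, applying it to each $cf$ yields $(2)\Leftrightarrow(3)$ at once.

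For $(1)\Rightarrow(3)$ I would argue by contrapositive. Suppose $m:=\gcd(d,d_1,\dots,d_k)>1$, and write $d=md'$, $d_i=md_i'$. Since $\ov{\F_q}$ is algebraically closed we may choose $b$ with $b^m=a$ and set $h(x)=b\prod_i (x-\alpha_i)^{d_i'}\in\ov{\F_q}[x]$, so that $f=h^m$. Then
\begin{align*}
y^d-f(x)=\bigl(y^{d'}\bigr)^m-h(x)^m=\bigl(y^{d'}-h(x)\bigr)\sum_{j=0}^{m-1}y^{d'(m-1-j)}h(x)^j ,
\end{align*}
a genuine factorization into two non-units: the first factor has $y$-degree $d'\ge 1$ and the second has $y$-degree $d'(m-1)\ge 1$ because $m\ge 2$. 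Hence $y^d-f$ is not absolutely irreducible. The point worth stressing is that this factorization is valid in every characteristic and never requires a root of unity to lie in $\ov{\F_q}$.

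For $(3)\Rightarrow(1)$ I would invoke the binomial irreducibility criterion over an arbitrary field: $y^d-\beta$ is irreducible over $K$ provided (i) $\beta\notin K^\ell$ for every prime $\ell\mid d$, and (ii) $\beta\notin -4K^4$ whenever $4\mid d$. The translation to multiplicities is the crux. Using the valuation $v_{\alpha_i}$ on $K=\ov{\F_q}(x)$ given by order of vanishing at $\alpha_i$, we have $v_{\alpha_i}(f)=d_i$; hence if $f=g^\ell$ for some $g\in K$ then $\ell\mid v_{\alpha_i}(f)=d_i$ for every $i$, so $f\in K^\ell$ forces $\ell\mid\gcd(d_1,\dots,d_k)$. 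Assuming $(3)$, no prime $\ell\mid d$ divides $\gcd(d_1,\dots,d_k)$, so (i) holds; and if $4\mid d$ then $f\in -4K^4$ would similarly force $4\mid d_i$ for all $i$ (here $-4$ is a unit constant, the characteristic being odd in our setting), whence $4\mid\gcd(d,d_1,\dots,d_k)>1$, contradicting $(3)$, so (ii) holds as well. Therefore $y^d-f$ is irreducible over $K$, i.e.\ absolutely irreducible.

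I expect the main obstacle to be the characteristic-$p$ bookkeeping. When $p\mid d$ the polynomial is typically inseparable, and the familiar picture that its roots are $\zeta^j f^{1/d}$ for $d$-th roots of unity $\zeta$ breaks down, so both directions must be arranged to avoid appealing to a full set of $d$-th roots of unity in $\ov{\F_q}$. The explicit factorization above and the characteristic-free binomial criterion are precisely what sidestep this, while condition (ii), the $-4K^4$ case for $4\mid d$, is the easy-to-overlook special case that must be verified separately.
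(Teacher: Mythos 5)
The paper offers no proof of this lemma to compare against: it is quoted from Schmidt \cite[Ch.\ 1, Lem.\ 2C]{Schmidt}, and the text simply points the reader there. Your argument is a correct, self-contained proof along the textbook route (Gauss's lemma plus the Vahlen--Capelli/Lang binomial criterion). The individual steps all check out: monicity in $y$ makes $y^d-f$ primitive over $\ov{\F_q}[x]$, so absolute irreducibility is equivalent to irreducibility of $y^d-\beta$ over $K=\ov{\F_q}(x)$; the contrapositive of $(1)\Rightarrow(3)$ works in every characteristic via $A^m-B^m=(A-B)(A^{m-1}+\cdots+B^{m-1})$, using algebraic closedness only to extract an $m$-th root $b$ of the leading coefficient; the valuation argument at the $\alpha_i$ correctly converts $f\in K^\ell$ (resp.\ $f\in -4K^4$) into $\ell\mid d_i$ (resp.\ $4\mid d_i$) for all $i$; and since $(3)$ is visibly invariant under $f\mapsto cf$ for $c\in\F_q^*$, the equivalence $(2)\Leftrightarrow(3)$ does follow formally from $(1)\Leftrightarrow(3)$. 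Two minor remarks. First, because $\ov{\F_q}\subset K$ is algebraically closed, in odd characteristic $-4$ is already a fourth power in $K$, so $-4K^4=K^4$ and Capelli's exceptional condition (ii) is subsumed by condition (i) with $\ell=2$; your separate treatment is fine but unnecessary. Second, the lemma as stated does not restrict to odd $q$, while your parenthetical for (ii) assumes odd characteristic; this gap is closed trivially, since in characteristic $2$ one has $-4=0$, hence $-4K^4=\{0\}$ and (ii) is vacuous for $f\neq 0$ (like the lemma itself, your proof implicitly assumes $f\neq 0$, which is forced by the factorization in $(3)$). With those cosmetic caveats, the proposal is a complete and valid proof of the cited result.
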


\begin{thm}[Weil's bound \cite{Weil}] \label{Weil}
Let $q$ be a prime power and $\chi \in \widehat{\F_q^*}$, with order $|\chi| = d > 1$. Let $\psi$ be an additive character of $\F_q$. Let $m, n \geq 1$ and let $f, g \in \F_q[x]$ with the following properties:
\begin{enumerate}
    \item The polynomial $f$ has $m$ distinct roots in the algebraic closure of $\F_q$.
    \item The polynomial $g$ is of degree $n$. 
    \item The polynomials $y^d - f(x)$ and $z^q - z - g(x)$ are absolutely irreducible.
\end{enumerate}
Then
$$
\left| \sum_{x\in \F_q}\chi(f(x))\psi(g(x)) \right| \leq (m+n - 1)\sqrt{q}.
$$
\end{thm}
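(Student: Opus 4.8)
The plan is to recognize the mixed character sum as (up to sign) a sum of Frobenius eigenvalues on the cohomology of an algebraic curve, and then to invoke the Riemann Hypothesis for curves over finite fields, which forces each such eigenvalue to have absolute value exactly $\sqrt{q}$. First I would build the curve that simultaneously ``sees'' both characters. The multiplicative character $\chi$ of order $d$ is captured by the Kummer cover $y^d = f(x)$, while the additive character $\psi$ is captured by the Artin--Schreier cover $z^q - z = g(x)$, whose Galois group is $(\F_q, +)$; taking the fiber product of these two covers over the affine $x$-line produces a single cover $C$ of the line with Galois group $\Z/d \times (\F_q, +)$. Hypothesis (3), that $y^d - f(x)$ and $z^q - z - g(x)$ are each absolutely irreducible, is precisely what guarantees that the associated rank-one sheaves $\mathcal{L}_\chi$ and $\mathcal{L}_\psi$ are geometrically nontrivial, equivalently that $C$ is geometrically irreducible and neither character degenerates to a constant along the fibers.

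Next I would relate $S = \sum_{x} \chi(f(x))\psi(g(x))$ to the point count of $C$. By orthogonality of characters, the number of $\F_q$-points of $C$ lying over a given $x_0$ with $f(x_0) \neq 0$ decomposes as a sum, over all character pairs $(\chi', \psi')$ of the Galois group, of terms $\chi'(f(x_0))\psi'(g(x_0))$. Summing over $x_0$, the total point count of $C$ splits into a main term of size $q$ plus the individual character sums $S_{\chi',\psi'}$, one of which is our $S$. In $L$-function language, each pair $(\chi', \psi')$ contributes a factor $L(\chi', \psi'; t) = \prod_i (1 - \omega_i t)$ to the zeta function of $C$, and the sum $S$ equals, up to sign, the power sum $\sum_i \omega_i$ of the inverse roots of its own factor.

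Two facts then finish the argument. The Riemann Hypothesis for curves (Weil's theorem) asserts that every inverse root satisfies $|\omega_i| = \sqrt{q}$, and a conductor/genus computation, via Riemann--Hurwitz applied to $C$ or via the Grothendieck--Ogg--Shafarevich Euler characteristic formula for $\mathcal{L}_\chi \otimes \mathcal{L}_\psi$, shows that the relevant factor has exactly $m + n - 1$ inverse roots. Here hypothesis (1), that $f$ has $m$ distinct roots, controls the tame ramification of the Kummer part, while hypothesis (2), that $\deg g = n$, controls the wild ramification of the Artin--Schreier part at infinity. Combining these yields $|S| = |\sum_i \omega_i| \leq (m+n-1)\sqrt{q}$.

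The hard part is genuinely the Riemann Hypothesis input: the estimate $|\omega_i| = \sqrt{q}$ is the deep theorem, and there is no way around it (the authors, reasonably, simply cite it). If one insists on remaining elementary, as in the approach of Schmidt's book that is cited here, this step is replaced by the Stepanov--Bombieri method: one constructs an auxiliary function on $C$ vanishing to high order at every $\F_q$-rational point, bounds the number of such points by a degree count, and then bootstraps to the sharp two-sided estimate by passing to the extension fields $\F_{q^r}$. Either way, the second delicate point is pinning down the exact constant $m + n - 1$ rather than a cruder bound, which requires a careful local analysis of the ramification contributed by the zeros of $f$ and by the pole of $g$ at infinity; getting this count exactly right is what makes the stated bound sharp.
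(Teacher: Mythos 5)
The paper does not prove this theorem: it is quoted as Weil's bound, attributed to \cite{Weil}, with the reader referred to \cite{Schmidt} for proofs, so there is no internal argument to compare yours against; the only question is whether your outline is a legitimate proof plan, and it is. What you describe is essentially Weil's original route: encode $\chi$ in the Kummer cover $y^d = f(x)$ and $\psi$ in the Artin--Schreier cover $z^q - z = g(x)$, pass to their fiber product over the $x$-line, factor the zeta function of that curve over the characters of the Galois group $\Z/d \times (\F_q,+)$, recognize $\sum_{x}\chi(f(x))\psi(g(x))$ as (up to sign) the power sum of the inverse roots of its own $L$-factor, and conclude from the Riemann Hypothesis for curves together with a conductor count showing this factor has degree $m+n-1$. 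You also assign the hypotheses their correct roles: (3) is geometric nontriviality, (1) governs the tame conductor at the zeros of $f$, and (2) the wild (Swan) conductor at infinity. Three points would need care in a complete write-up: geometric nontriviality of the tensor of the two rank-one pieces is not a formal consequence of the nontriviality of each factor separately (one argues that the Kummer part is ramified at finite points where the Artin--Schreier part is unramified, so they cannot cancel); purity $|\omega_i| = \sqrt{q}$ requires knowing the relevant factor divides the zeta numerator of the complete smooth model, though the stated inequality only needs $|\omega_i| \le \sqrt{q}$; and when $p$ divides $n$ the degree of the $L$-factor can fall below $m+n-1$, which only strengthens the bound. The elementary alternative you mention at the end is exactly the route taken by the cited source \cite{Schmidt} (Stepanov--Bombieri), trading the cohomological input for auxiliary-polynomial constructions; your version is the one from which the sharp constant $m+n-1$ emerges most transparently.
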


\begin{cor}\label{MainTool}
Let $q$ be an odd prime power, $c \in \F_q$ be a constant, $\chi \in \widehat{\F_q^*}$ be a nontrivial character, and $\delta_{xy}$ be the Kronecker delta function on $\F_q$. Then
$$
\left| \sum_{t \in \F_q} \chi(t)\eta(t^2 - 1)\psi(ct) \right| \leq (3 - \delta_{0c})\sqrt{q}.
$$
\end{cor}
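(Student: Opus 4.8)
The plan is to realize the sum as a single Weil sum of the form in Theorem~\ref{Weil} and then read off the estimate. The only genuine difficulty is that the summand carries a product of two a priori unrelated multiplicative characters, $\chi$ and $\eta$, whereas Theorem~\ref{Weil} is stated for $\chi(f(x))$ with a single multiplicative character. So the first task is to merge $\chi(t)\eta(t^2-1)$ into $\rho(f(t))$ for one character $\rho$ and one polynomial $f$.

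To do this, fix the generator $\chi_1$ of $\widehat{\F_q^*}$ and write $\chi=\chi_1^{a}$ with $1\le a\le q-2$ (so $a\neq 0$ since $\chi$ is nontrivial) and $\eta=\chi_1^{(q-1)/2}$. Set $r=\gcd\!\big(a,(q-1)/2\big)$, let $\rho=\chi_1^{r}$, and put
\[
f(t)=t^{a/r}\,(t^2-1)^{(q-1)/(2r)}.
\]
Since $\chi_1(x^{q-1})=1$ for $x\neq 0$, one checks directly that $\rho(f(t))=\chi_1^{a}(t)\,\chi_1^{(q-1)/2}(t^2-1)=\chi(t)\eta(t^2-1)$ for every $t$ (at $t=0,\pm1$ both sides vanish). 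The character $\rho$ has order $d=(q-1)/r$, and since $r\le (q-1)/2$ we have $d\ge 2$, so $\rho$ is nontrivial as required.

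Next I would verify the hypotheses of Theorem~\ref{Weil}. As $q$ is odd, the roots $0,1,-1$ of $f$ are distinct, and $a/r\ge 1$, so $f$ has exactly $m=3$ distinct roots. For absolute irreducibility of $y^{d}-f$ I invoke Lemma~\ref{Schlem}(3): the exponents appearing in $f$ are $a/r,\,(q-1)/(2r),\,(q-1)/(2r)$, and
\[
\gcd\!\big(a/r,\,(q-1)/(2r)\big)=\gcd\!\big(a,(q-1)/2\big)/r=1,
\]
so the relevant gcd with $d$ is $1$ and $y^{d}-f$ is absolutely irreducible. This is the step I expect to be the crux: it is precisely the division by $r=\gcd(a,(q-1)/2)$ that secures absolute irreducibility regardless of how $\chi$ relates to $\eta$ (for example when $\chi=\eta$, where attempting to use the full order $q-1$ would violate the gcd condition).

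Finally I would split on $c$. If $c\neq 0$, take $g(t)=ct$, which has degree $n=1$ and for which $z^{q}-z-ct$ is absolutely irreducible (it is linear in $t$ with unit leading coefficient, so the defining curve is rational). Theorem~\ref{Weil} then yields the bound $(m+n-1)\sqrt q=(3+1-1)\sqrt q=3\sqrt q$, which matches $(3-\delta_{0c})\sqrt q$ since $\delta_{0c}=0$. If $c=0$, then $\psi(ct)=1$ and the sum reduces to the pure multiplicative character sum $\sum_{t}\rho(f(t))$; here the classical multiplicative form of Weil's bound (the $n=0$ specialization, see \cite{Schmidt,Weil}) gives $(m-1)\sqrt q=2\sqrt q=(3-\delta_{00})\sqrt q$. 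Combining the two cases yields the claim.
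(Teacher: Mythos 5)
Your proof is correct and takes essentially the same route as the paper: both merge $\chi(t)\eta(t^2-1)$ into a single nontrivial character $\chi_1^{r}$ evaluated at the polynomial $t^{a/r}(t^2-1)^{(q-1)/(2r)}$ with $r=\gcd(a,(q-1)/2)$ (the paper's $s$), invoke Lemma~\ref{Schlem} for absolute irreducibility, and conclude with Theorem~\ref{Weil}. Your write-up is simply more explicit than the paper's one-line proof, notably in verifying the gcd identity that secures irreducibility, in checking that $z^q-z-ct$ is absolutely irreducible when $c\neq 0$, and in treating $c=0$ separately via the pure multiplicative form of Weil's bound, all of which the paper leaves implicit.
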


\begin{proof}
Let $\chi \in \widehat{\F_q^*}$ with $|\chi| = d > 1$. Let $\chi_1$ be a primitive character such that $\chi(t) = \chi_1(t^{(q-1)/d})$. If $s = \text{gcd}((q-1)/d, (q-1)/2)$, then by Lemma \ref{Schlem} and the Weil's bound (Theorem \ref{Weil})
\begin{align*}
&\left| \sum_{t \in \F_q} \chi(t)\eta(t^2-1)\psi(ct) \right| = \left| \sum_{t \in \F_q}\chi_1^s\left(t^{(q-1)/ds}(t^2 - 1)^{(q-1)/2s}\right)\psi(ct) \right| \leq (3 - \delta_{0c})\sqrt{q}. \qedhere
\end{align*}

\end{proof}

\begin{cor}\label{MainTool2}
Let $q$ be an odd prime power, $c \in \F_q^*$ be a constant, and $\chi \in \widehat{\F_q^*}$ be a nontrivial character. Then
$$
\left| \sum_{\substack{t\in \mathbb{F}_q\\ t\neq -1}}\chi(t)\eta(t^2 - 1)\zeta^{-tr(c\frac{(t - 1)}{(t + 1)})} \right| \leq 3\sqrt{q}.
$$
\end{cor}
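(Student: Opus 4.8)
The plan is to strip the rational function $\frac{t-1}{t+1}$ out of the additive character by a M\"obius substitution, thereby reducing the sum to a mixed character sum whose additive argument is \emph{linear}. The polynomial form of Weil's bound (Theorem \ref{Weil}) then applies in exactly the same way it was used to prove Corollary \ref{MainTool}. Concretely, I would substitute $u = \frac{t-1}{t+1}$. As $t$ runs over $\F_q \setminus \{-1\}$ this is a bijection onto $\F_q \setminus \{1\}$, with inverse $t = \frac{1+u}{1-u}$. The whole point of the substitution is that $c\frac{t-1}{t+1} = cu$, so the offending factor $\zeta^{-tr(c(t-1)/(t+1))}$ becomes $\zeta^{-tr(cu)} = \psi(-cu)$, the canonical additive character evaluated at the polynomial $g(u) = -cu$ of degree exactly $1$ (here $c \neq 0$ is essential).

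Next I would record how the remaining factors transform. From $t-1 = \frac{2u}{1-u}$ and $t+1 = \frac{2}{1-u}$ one gets $t^2 - 1 = \frac{4u}{(1-u)^2}$, and since $\eta$ is trivial on nonzero squares, $\eta(t^2-1) = \eta(4)\eta(u)\eta((1-u)^2)^{-1} = \eta(u)$; the degenerate value $u=0$ (i.e.\ $t=1$) contributes $0$ on both sides. The factor $\chi(t)$ becomes $\chi\left(\frac{1+u}{1-u}\right) = \chi(1+u)\,\overline{\chi}(1-u)$, with $\chi(0)=0$ taking care of $u=-1$ (i.e.\ $t=0$). After extending the sum to all of $\F_q$ (the new point $u=1$ forces $\overline{\chi}(1-u)=\overline{\chi}(0)=0$ and so adds nothing), the sum in question equals $\sum_{u\in\F_q}\chi(1+u)\,\overline{\chi}(1-u)\,\eta(u)\,\zeta^{-tr(cu)}$.

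Finally I would collapse the three multiplicative characters into a single one, mimicking the proof of Corollary \ref{MainTool}. Writing $\chi = \chi_1^{(q-1)/d}$ for a primitive character $\chi_1$ with $|\chi| = d$, and $\eta = \chi_1^{(q-1)/2}$, the product equals $\chi_1\bigl((1+u)^{e_1}(1-u)^{e_2}u^{e_3}\bigr)$ with $e_1 = (q-1)/d$, $e_2 = (q-1)(d-1)/d$, $e_3 = (q-1)/2$. Setting $s = \gcd(e_1,e_2,e_3)$ and $f(u) = (1+u)^{e_1/s}(1-u)^{e_2/s}u^{e_3/s}$, the multiplicative part becomes $\chi_1^s(f(u))$, where $\chi_1^s$ has order $(q-1)/s > 1$. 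Since $\chi$ is nontrivial ($d>1$), all three exponents are positive, so $f$ has exactly $m = 3$ distinct roots $\{-1,1,0\}$; and because $\gcd(e_1/s, e_2/s, e_3/s) = 1$, Lemma \ref{Schlem} guarantees that $y^{(q-1)/s} - f(u)$ is absolutely irreducible. Applying Theorem \ref{Weil} with this $f$ (so $m = 3$) and $g(u) = -cu$ (so $n = 1$) yields the bound $(m+n-1)\sqrt{q} = 3\sqrt{q}$.

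The step I expect to be the main obstacle is the third one: the M\"obius substitution trades a rational additive character for a product of \emph{three} distinct multiplicative characters, and the real work is verifying that this product rewrites as a single character $\chi_1^s$ of a single polynomial whose number of distinct roots stays at $3$, together with checking the absolute-irreducibility hypothesis of Weil's bound through Lemma \ref{Schlem}. One must also confirm that the excluded and degenerate points $t \in \{-1,0,1\}$ (equivalently $u \in \{1,-1,0\}$) never spoil the reindexing, which is why I would keep careful track of where each character factor vanishes.
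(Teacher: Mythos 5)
Your proposal is correct and takes essentially the same route as the paper: the same M\"obius substitution $u = \frac{t-1}{t+1}$ (the paper writes $s$), which linearizes the additive argument, followed by collapsing the multiplicative factors into a single character of a polynomial with three distinct roots $\{0, 1, -1\}$ and invoking Lemma \ref{Schlem} together with Weil's bound (Theorem \ref{Weil}) with $m = 3$, $n = 1$, exactly as in Corollary \ref{MainTool}. The only cosmetic difference is that the paper expresses the inverse substitution as the polynomial $t = -(s-1)^{q-2}(s+1)$ via $a^{-1} = a^{q-2}$, whereas you split $\chi(t) = \chi(1+u)\,\overline{\chi}(1-u)$ and track the exponents explicitly; both bookkeeping choices lead to the same application of Weil's bound and the same bound $3\sqrt{q}$.
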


\begin{proof}
Note that, for $t \neq -1$, we have that
$
\eta(t^2 - 1) = \eta\left(\frac{t-1}{t+1}\right).
$
Furthermore
$$
s = \frac{t-1}{t+1} \iff t = -\frac{s+1}{s-1}.
$$
Thus, by the same reasoning as in Corollary \ref{MainTool}, Weil's bound gives
$$
\left|\sum_{\substack{t\in \mathbb{F}_q\\ t\neq -1}}\chi(t)\eta(t^2 - 1)\zeta^{-tr(c\frac{(t - 1)}{(t + 1)})} \right|= \left| \sum_{s \in \F_q}\chi(-(s-1)^{(q-2)}(s+1))\eta(s)\zeta^{-tr(cs)}\right| \leq 3\sqrt{q}.
$$
\end{proof}

\section{Irreducible representations of the group \boldmath{$G$}}\label{Section_irr_rep}
For background on representation theory, and its applications to compute eigenvalues of Cayley graphs we refer the reader to \cite[Ch.\ 3]{DSV}, and \cite[Part 3]{KS}. Let $G = (\F_q^5, \cdot)$, where $\cdot$ is as defined in (\ref{GroupOp}). Denote $X= (x_1,x_2,x_3,x_4,x_5) \in G$ and $Y= (y_1,y_2,y_3,y_4,y_5) \in G$. For each $\alpha, \beta, \gamma \in \mathbb{F}_q$, the linear characters of $G$ are given by
\begin{align}\label{Linear_char}
\chi_{\alpha,\beta,\gamma}(X) := \zeta^{tr(\alpha x_1 + \beta x_2 + \gamma x_3)}.
\end{align}
For each $\alpha, \beta, \gamma \in \mathbb{F}_q$ with $\alpha \neq 0$, we define the dimension $q$ representation $M_{\alpha,\beta,\gamma}: G \to GL_q(\mathbb{C})$ by
\begin{align*}
M_{\alpha,\beta,\gamma}(X) := [\zeta^{tr\left(\left(x_2+\frac{\beta}{\alpha}x_3\right)j +  \alpha x_4 + \beta x_5 + \gamma x_3\right)} \delta_{2x_1\alpha + j, k}]_{j, k \in \F_q}.
\end{align*}
 For each $\tau, \mu \in \mathbb{F}_q$ with $\tau \neq 0$, we define the dimension $q$ representation $N_{\tau,\mu} : G \to GL_q(\mathbb{C})$ by
\begin{align*}
N_{\tau,\mu}(X) := [\zeta^{tr(  x_3 j +\tau x_5 + \mu x_2 )} \delta_{2x_1\tau+j,k}]_{j, k \in \F_q}.
\end{align*}

It is easy to check that
 $M_{\alpha,\beta,\gamma}(X Y) = M_{\alpha,\beta,\gamma}(X)M_{\alpha,\beta,\gamma}(Y)$ and 
  $N_{\tau,\mu}(X Y) = N_{\tau,\mu}(X)N_{\tau,\mu}(Y)$,
i.e., they are group homomorphisms, and so are group representations of $G$. The associated characters are found to be
\begin{align*}
\psi_{\alpha,\beta,\gamma}(X) &:= \text{Tr }(M_{\alpha,\beta,\gamma}(X))
= \begin{cases}
q \zeta^{tr(\alpha x_4 + \beta x_5 + \gamma x_3)}, &\text{if } x_1 = 0,\text{and } x_2 = -\frac{\beta}{\alpha}x_3\\
0, &\text{otherwise};
\end{cases}\\
\phi_{\tau,\mu}(X) &:= \text{Tr }(N_{\tau,\mu}(X))
= \begin{cases}
q \zeta^{tr(\tau x_5 + \mu x_2)}, &\text{if } x_1 = 0,\text{and } x_3 = 0\\
0, &\text{otherwise}.
\end{cases}
\end{align*} 
These $q^3-q$ characters of $G$ are irreducible and inequivalent since
\begin{align*}
[\psi_{\alpha,\beta,\gamma},\psi_{\alpha',\beta',\gamma'}]_G &= \frac{1}{q^5} \sum_{x_1,x_2,x_3,x_4,x_5\in \mathbb{F}_q} \psi_{\alpha,\beta,\gamma}(X)\overline{\psi_{\alpha',\beta',\gamma'}(X)}\\
&= \frac{1}{q^3} \sum_{x_3 \in \mathbb{F}_q} \zeta^{tr((\gamma-\gamma')x_3)}\sum_{x_4\in \mathbb{F}_q} \zeta^{tr((\alpha -\alpha')x_4)} \sum_{x_5\in \mathbb{F}_q} \zeta^{tr((\beta-\beta')x_5)}\\
&= \begin{cases}
1, &\text{if } (\alpha,\beta,\gamma) = (\alpha',\beta',\gamma')\\
0, &\text{otherwise};
\end{cases}\\
\end{align*}
\begin{align*}
[\phi_{\tau, \mu},\phi_{\tau',\mu'}]_G &= \frac{1}{q^5} \sum_{x_1,x_2,x_3,x_4,x_5\in \mathbb{F}_q} \phi_{\tau, \mu}(X)\overline{\phi_{\tau',\mu'}(X)}\\
&= \frac{1}{q^3} \sum_{x_4\in \mathbb{F}_q}\sum_{x_5\in \mathbb{F}_q} \zeta^{tr((\tau-\tau')x_5)}\sum_{x_2\in \mathbb{F}_q}\zeta^{tr((\mu-\mu')x_2)}\\
&= \begin{cases}
1, &\text{if } (\tau,\mu) = (\tau',\mu')\\
0, &\text{otherwise};
\end{cases}
\end{align*}
\begin{align*}
[\psi_{\alpha,\beta,\gamma},\phi_{\tau,\mu}]_G &= \frac{1}{q^5}\sum_{x_1,x_2,x_3,x_4,x_5\in \mathbb{F}_q} \psi_{\alpha,\beta,\gamma}(X)\overline{\phi_{\tau,\mu}(X)}\\
&= \frac{1}{q^3} \sum_{x_5\in \mathbb{F}_q}\zeta^{tr((\beta-\tau)x_5)}\sum_{x_4\in \mathbb{F}_q}\zeta^{tr(\alpha x_4)} = 0.
\end{align*}

Furthermore, these are all the irreducible representations since we have that the sum of the squares of the dimensions of all irreducible representations should equal the order of the group, $(q^3 - q)q^2 + q^3 = q^5$.

Since the point graph $\Gamma^{(2)}(q)$ of $D(5,q)$ is $\text{Cay}(G,S)$, then the adjacency matrix $A$ of $\Gamma^{(2)}(q)$ has characteristic polynomial
\begin{align*}
\Phi(x) &=  \prod_{\alpha,\beta,\gamma\in \mathbb{F}_q}(x-\chi_{\alpha,\beta,\gamma}(S)) \prod_{\substack{\alpha,\beta,\gamma \in \mathbb{F}_q\\ \alpha\neq 0}} \text{det}[xI_q-M_{\alpha,\beta,\gamma}(S)]^q \prod_{\substack{\tau,\mu \in \mathbb{F}_q\\ \tau \neq 0}} \text{det}[xI_q - N_{\tau,\mu}(S)]^q
\end{align*}
where 
$$
\chi_{\alpha, \beta, \gamma}(S) := \sum_{g \in S} \chi_{\alpha, \beta, \gamma}(g), \hs M_{\alpha, \beta, \gamma}(S) := \sum_{g \in S}M_{\alpha, \beta, \gamma}(g), \hs N_{\tau, \mu}(S) := \sum_{g \in S}N_{\tau, \mu}(g).
$$
The matrices $M_{\alpha, \beta, \gamma}(S)$ and $N_{\tau, \mu}(S)$ will always be assumed to be indexed by the elements of $\F_q$. 
Note that the contribution of the eigenvalues from the linear characters are the same as that of $D(4, q)$, see \cite[Section 6]{MSW}. So we are left with determining the eigenvalues coming from the $M_{\alpha,\beta, \gamma}(S)$ (Section \ref{MMatrix}) and $N_{\tau, \mu}(S)$ (Section \ref{NMatrix}). 

\section{Spectrum of matrices \boldmath{$M_{\alpha, \beta, \gamma}(S)$}}\label{MMatrix}

In this section, we determine the exact spectrum of each matrix $M_{\alpha, \beta, \gamma}(S)$. Furthermore, by application of Corollaries \ref{MainTool} and \ref{MainTool2}, we will demonstrate that each eigenvalue has modulus at most $3q$. The insight we can share ahead of the proofs is that each matrix $M_{\alpha, \beta, \gamma}(S)$ is similar to a matrix that has $q-2$ eigenvectors of the form $v_\chi$, where $(v_\chi)_j = (\chi(j))$ for each non-trivial character $\chi \in \widehat{\F_q^*}$. The remaining two eigenvectors have the form $(z, 1, 1, \dots, 1)$ where $z$, and respectively the corresponding eigenvalue $\lambda$, satisfy a particular quadratic polynomial.

We begin with evaluating $M_{\alpha, \beta, \gamma}(S)$.
\begin{align*}
M := M_{\alpha,\beta,\gamma}(S) &= \sum_{g\in S}M_{\alpha,\beta,\gamma}(g)\\
&= \sum_{\substack{a,x\in \mathbb{F}_q\\ x\neq 0}} M_{\alpha,\beta,\gamma}((x,xa,xa^2,x^2a,x^2a^2))\\
&=\sum_{\substack{x\in \mathbb{F}_q\\ x\neq 0}}\sum_{a\in \mathbb{F}_q} [\zeta^{tr\left((xa+\frac{\beta}{\alpha}xa^2)j+\alpha x^2a +\beta x^2 a^2 +\gamma xa^2\right)}\delta_{2x\alpha+j,k}]_{j, k \in \F_q}.
\end{align*}
 So in particular, the diagonal entries are zero and we have that for each $j, x \in \F_q$ with $x \neq 0$,
\begin{align}\label{Sum1}
M_{j, 2\alpha x + j} &= \sum_{a\in \mathbb{F}_q} \zeta^{tr\left((xa+\frac{\beta}{\alpha}xa^2)j+\alpha x^2a +\beta x^2 a^2 +\gamma xa^2\right)} \nonumber \\
&= \sum_{a\in \mathbb{F}_q} \zeta^{tr\left(a(xj + \alpha x^2) + a^2(\frac{\beta}{\alpha}x j + \beta x^2 + \gamma x) \right)} \\
&= \sum_{a\in \mathbb{F}_q} \zeta^{tr\left(a f(x, j) + a^2 g(x, j) \right),} \nonumber
\end{align}
where $f(x, j) = xj + \alpha x^2$ and $g(x, j) = \frac{\beta}{\alpha}xj + \beta x^2 + \gamma x = \frac{\beta}{\alpha}f(x) + \gamma x$.


Now if $f(x, j) = g(x, j) = 0$, then (\ref{Sum1}) evaluates to $q$. If $g(x, j) = 0$ and $f(x, j) \neq 0$, then (\ref{Sum1}) evaluates to 0. If $g(x, j) \neq 0$, then we can rearrange the exponents and evaluate the sum. Note
\begin{align*}
af(x, j) + a^2g(x, j) = g(x,j)\left(a + \frac{f(x, j)}{2g(x, j)} \right)^2 - \frac{f(x, j)^2}{4g(x, j)}.
\end{align*}
Therefore we have 
\begin{align*}
    M_{j, 2\alpha x + j} &= \sum_{a\in \mathbb{F}_q} \zeta^{tr\left(a f + a^2 g \right)} 
    = \sum_{a\in \mathbb{F}_q} \zeta^{tr\left(g\left(a + \frac{f}{2g} \right)^2 - \frac{f^2}{4g} \right)} 
    = {\zeta}^{tr(-\frac{f^2}{4g})}\sum_{a\in \mathbb{F}_q}\zeta^{tr(ga^2)} = \zeta^{-tr(\frac{f^2}{4g})}\eta(g)S_{y^2}.
\end{align*}
Letting $k = 2\alpha x + j$, and let us perform a change of variables to obtain $M_{j, k}$ in terms of just $j$ and $k$.
We obtain
$$
F_{\alpha}(j, k) :=f\left( \frac{k - j}{2\alpha}, j\right) = \frac{1}{4\alpha}(k^2 - j^2),
$$
and
$$
G_{\alpha, \beta, \gamma}(j, k) := g\left(\frac{k - j}{2\alpha}, j\right) = \frac{\beta}{4\alpha^2}\left( k^2 - j^2 +\frac{2\gamma\alpha (k - j)}{\beta} \right).
$$
The above discussion allows us to define the entries of $M$ explicitly. We summarize this as a lemma.
\begin{lem}\label{MM}
Let $M = M_{\alpha, \beta, \gamma}(S)$, $F = F_{\alpha}(j, k)$, and $G = G_{\alpha, \beta, \gamma}(j, k)$. Then
 \[
 M_{j, k} = 
 \left\{
 \begin{array}{ll}
\eta(G){\zeta}^{-tr(\frac{F^2}{4G})}S_{y^2}, & \mbox{\text{if } $G \neq 0$}, \\
 0, & \mbox{\text{if } $G = 0$ and $F \neq 0$ } \\
 0, & \mbox{\text{if } $j = k$} \\
 q, & \mbox{\text{if otherwise}}.
 \end{array}
 \right.
 \]
\end{lem}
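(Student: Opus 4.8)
The plan is to read the four cases directly off the expansion of $M$ carried out above, organizing it as a clean case analysis on a fixed entry $(j,k)$. First I would observe that each summand $M_{\alpha,\beta,\gamma}(g)$, with $g = (x, xa, xa^2, x^2a, x^2a^2) \in S$, is a weighted permutation matrix: the factor $\delta_{2x\alpha + j, k}$ confines its nonzero entries to the positions $(j, 2x\alpha + j)$. Hence, summing over $a \in \F_q$ and $x \in \F_q^*$, the $(j,k)$ entry of $M$ collects exactly the terms with $2x\alpha + j = k$. Since $\alpha \neq 0$, this forces the single value $x = (k-j)/(2\alpha)$, so one value of $x$ feeds each entry and the inner sum over $a$ is precisely the sum in (\ref{Sum1}).

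Next I would dispose of the diagonal. The unique $x$ attached to an entry vanishes precisely when $j = k$, and $S$ contains no element with $x = 0$; therefore no summand reaches a diagonal position and $M_{j,j} = 0$. This is the structural reason behind the third case, and it must be \emph{given priority} in the case split: as one checks directly, $F = G = 0$ whenever $j = k$, so treating the diagonal separately is what keeps it from being absorbed into the final ``otherwise'' branch.

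For an off-diagonal pair, where $x \neq 0$, I would evaluate (\ref{Sum1}) by splitting on whether $g = g(x,j)$ is zero. When $g \neq 0$, completing the square and applying Lemma \ref{square} yields $\eta(g)\,\zeta^{-tr(f^2/4g)}\,S_{y^2}$. When $g = 0$, the sum collapses to the additive-character sum $\sum_{a \in \F_q} \zeta^{tr(af)}$ of a linear polynomial, which equals $0$ for $f \neq 0$ and $q$ for $f = 0$, by the evaluation recalled at the start of Section \ref{Section_background}. Substituting $x = (k-j)/(2\alpha)$ then turns $f$ and $g$ into $F = F_{\alpha}(j,k)$ and $G = G_{\alpha,\beta,\gamma}(j,k)$, reproducing the four displayed cases.

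The one genuine subtlety---and the step I would watch most carefully---is the bookkeeping between the diagonal case and the ``otherwise $= q$'' case. Because $F = G = 0$ holds identically on the diagonal, one must record that the $q$-branch can fire only off the diagonal, namely when $k = -j$ with $j \neq 0$ (which forces $F = 0$) and, from the explicit form of $G$, only when $\gamma = 0$. Fixing this ordering is exactly what makes the piecewise definition consistent; everything else amounts to the routine Gauss-sum evaluation and change of variables already set up above.
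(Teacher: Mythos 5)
Your proposal is correct and follows essentially the same route as the paper: expand $M_{\alpha,\beta,\gamma}(S)$ as a sum of weighted permutation matrices so that each entry $(j,k)$ receives the single value $x=(k-j)/(2\alpha)$, note the diagonal vanishes since $x\neq 0$, evaluate the inner sum over $a$ by completing the square and Lemma \ref{square} when $g\neq 0$ and by the linear character-sum evaluation when $g=0$, then substitute to get $F$ and $G$. Your extra observations---that the cases must be read in order because $F=G=0$ on the diagonal, and that the $q$-branch occurs only when $k=-j\neq j$ and $\gamma=0$---are accurate and consistent with how the paper later uses the lemma.
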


\begin{remark}
When $\beta = 0$, we may instead directly substitute $x = \frac{k-j}{2\alpha}$ in (\ref{Sum1}) to see that for each pair $\alpha, \gamma \in \F_q$ with $\alpha \neq 0$, $M_{-\frac{1}{4\alpha},0,-\frac{\gamma}{2\alpha}}(S)$ is the same matrix as that of in \cite[Section 6]{MSW}, whose spectrum was determined and was shown to be upper bounded by $3q$. Thus, for $M_{\alpha,\beta,\gamma} (S)$ we assume that $\beta \neq 0$.
\end{remark}

Having now an explicit description of the entries in $M$, we are left with analyzing the exponent of $\zeta$ in $M_{j, k}$ to better understand the matrices. We have
$$
-\frac{F(j,  k)^2}{4G(j, k)} = -\frac{\left( \frac{1}{4\alpha}(k^2 - j^2)\right)^2}{4\left(\frac{\beta}{4\alpha^2}\left( k^2 - j^2 +\frac{2\gamma\alpha (k-j)}{\beta} \right)  \right)}= - \frac{1}{16\beta}\left( \frac{(k^2 - j^2)^2}{k^2 - j^2 + 2\gamma\alpha (k - j)/\beta} \right).
$$

The following lemma allows us to reduce the problem of finding the spectrum of $M_{\alpha, \beta, \gamma}(S)$ to only concerning ourselves with the spectrum of $M_{1, \beta, \gamma}$.

\begin{lem}
Let $\alpha, \beta \in \F_q^*$, $ \gamma \in \F_q$.Then
$$
M_{\alpha, \beta, \gamma} = M_{1, \beta, \gamma\alpha}.
$$
\end{lem}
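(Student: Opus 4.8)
The lemma asserts that the matrix $M_{\alpha,\beta,\gamma}(S)$ equals $M_{1,\beta,\gamma\alpha}(S)$ — not merely similar, but literally the same matrix entry-by-entry. So the plan is to compare the two matrices entrywise using the explicit formula from Lemma \ref{MM}. For each pair $(j,k)$ I would compute both $M_{j,k}$ for the triple $(\alpha,\beta,\gamma)$ and $M_{j,k}$ for the triple $(1,\beta,\gamma\alpha)$, and show they agree. The key quantities to track are $F_{\alpha}(j,k)$, $G_{\alpha,\beta,\gamma}(j,k)$, the ratio $F^2/G$ appearing in the exponent of $\zeta$, and the quadratic-character value $\eta(G)$.

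**The main computation.** Let me set up the comparison. For the triple $(1,\beta,\gamma\alpha)$ I have $F_1(j,k) = \tfrac{1}{4}(k^2-j^2)$ and $G_{1,\beta,\gamma\alpha}(j,k) = \tfrac{\beta}{4}\bigl(k^2-j^2 + \tfrac{2\gamma\alpha(k-j)}{\beta}\bigr)$. For the original triple I have $F_{\alpha} = \tfrac{1}{\alpha}F_1$ and $G_{\alpha,\beta,\gamma} = \tfrac{1}{\alpha^2}G_{1,\beta,\gamma\alpha}$, reading off the factors of $\alpha$ from the displayed formulas just above the lemma. I would verify three things.

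\begin{enumerate}
\item \textbf{The exponent of $\zeta$ matches.} Since $F_{\alpha}^2/G_{\alpha,\beta,\gamma} = (\tfrac{1}{\alpha^2}F_1^2)/(\tfrac{1}{\alpha^2}G_{1,\beta,\gamma\alpha}) = F_1^2/G_{1,\beta,\gamma\alpha}$, the factors of $\alpha$ cancel exactly, so $-\tfrac{F_{\alpha}^2}{4G_{\alpha,\beta,\gamma}} = -\tfrac{F_1^2}{4G_{1,\beta,\gamma\alpha}}$ and the two arguments of $\zeta$ coincide.
\item \textbf{The character value matches.} Here $\eta(G_{\alpha,\beta,\gamma}) = \eta(\tfrac{1}{\alpha^2}G_{1,\beta,\gamma\alpha}) = \eta(\alpha^{-2})\eta(G_{1,\beta,\gamma\alpha})$, and since $\alpha^{-2}$ is a nonzero square, $\eta(\alpha^{-2}) = 1$; hence $\eta(G_{\alpha,\beta,\gamma}) = \eta(G_{1,\beta,\gamma\alpha})$.
\item \textbf{The case distinctions align.} Because $G_{\alpha,\beta,\gamma}$ and $G_{1,\beta,\gamma\alpha}$ differ only by the nonzero scalar $\alpha^{-2}$, one is zero if and only if the other is; likewise $F_{\alpha}$ vanishes exactly when $F_1$ does. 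The condition $j=k$ and the ``otherwise'' case depend only on $(j,k)$. So the branch of the piecewise definition in Lemma \ref{MM} selected by $(\alpha,\beta,\gamma)$ is the same branch selected by $(1,\beta,\gamma\alpha)$ for every $(j,k)$.
\end{enumerate}

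Combining these, $M_{j,k}$ agrees for the two triples in every case, so the matrices are identical.

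**Where the subtlety lies.** The computation is routine once the scaling relations $F_{\alpha} = \alpha^{-1}F_1$ and $G_{\alpha,\beta,\gamma} = \alpha^{-2}G_{1,\beta,\gamma\alpha}$ are correctly extracted; the one genuinely essential (if small) point is step (2), namely that $\eta$ is trivial on squares, which is what makes $\eta(\alpha^{-2})=1$ and is exactly why the reduction collapses $\alpha$ rather than leaving a residual sign $\eta(\alpha)$. I would double-check the $\gamma$-bookkeeping: the shift $\gamma \mapsto \gamma\alpha$ is forced precisely so that the $(k-j)$-term inside $G$ reads $\tfrac{2\gamma\alpha(k-j)}{\beta}$ in both normalizations, which is what guarantees $G_{\alpha,\beta,\gamma}$ and $G_{1,\beta,\gamma\alpha}$ are proportional (and not merely similar up to an additive change). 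This is the only place an error in tracking the constants would break the argument, so I expect that to be the step deserving the most care.
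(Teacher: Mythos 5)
Your proof is correct and follows essentially the same route as the paper: an entry-by-entry comparison via Lemma \ref{MM}, using the scaling relations $F_{\alpha} = \alpha^{-1}F_1$ and $\alpha^2 G_{\alpha,\beta,\gamma} = G_{1,\beta,\alpha\gamma}$ so that the exponents $F^2/4G$ coincide, $\eta(\alpha^{-2})=1$ kills the residual factor in the character value, and the zero/nonzero branch conditions match. The paper organizes the verification by cases on the value of $M_{j,k}$ (namely $0$, $q$, or otherwise) rather than by the three quantities you track, but the substance is identical.
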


\begin{proof}
Let $M = M_{\alpha, \beta, \gamma}(S)$ and $M' = M_{1, \beta, \alpha\gamma}(S)$.
\begin{enumerate}
    \item If $M_{j, k} = 0$, then either $j = k$, or $G_{\alpha, \beta, \gamma}(j, k) = 0$ and $F_{\alpha}(j, k) \neq 0$.
    If $j = k$, then $M'_{j, j} = 0 = M_{j, j}$.
    If $G_{\alpha, \beta, \gamma}(j, k) = 0$ and $F_{\alpha}(j, k) \neq 0$ then note, 
    $$
    G_{\alpha, \beta, \gamma}(j ,k) = 0 \iff (k^2 - j^2 + 2\gamma\alpha (k - j)/\beta) =0 \iff G_{1, \beta, \alpha\gamma}(j ,k) = 0,
    $$
    and 
    $$
    F_\alpha(j, k) \neq 0 \iff k^2 - j^2 \neq 0 \iff F_1(j, k) \neq 0
    $$
    and therefore $M'_{j, k} = 0$.
    \item If $M_{j, k} = q$, then $F_{\alpha}(j, k) = G_{\alpha, \beta, \gamma}(j, k) =  0$ and $j \neq k$. This can only occur if $\gamma = 0$. But then $-F_{\alpha}(j, k)^2/4G_{\alpha, \beta, 0}(j, k)  = -(k^2 - j^2)/16\beta$ which is independent of $\alpha$ and so $M_{j, k} = M'_{j, k}$.
    \item If $M_{j, k} \neq 0, q$, then
    $$
    -\frac{F_{\alpha}(j,  k)^2}{4G_{\alpha, \beta, \gamma}(j, k)} = - \frac{1}{16\beta}\left( \frac{(k^2 - j^2)^2}{k^2 - j^2 + 2\gamma\alpha (k - j)/\beta} \right) = -\frac{F_{1}(j,  k)^2}{4G_{1, \beta, \alpha\gamma}(j, k)}.
    $$
    Furthermore, we note that $\alpha^2G_{\alpha, \beta, \gamma} = G_{1, \beta, \alpha\gamma}$ and so 
    $
    \eta(G_{\alpha, \beta, \gamma})   = \eta(G_{1, \beta, \alpha, \gamma})
    $
    So $M_{j, k} = M'_{j, k}$ in this final case too.
\end{enumerate}
Thus, $M_{\alpha, \beta, \gamma}(S) = M_{1, \beta, \alpha\gamma}(S)$.
\end{proof}

\subsection{Spectrum of \boldmath{$M_{1, \beta, 0}(S)$}}\label{Subsection_5.1}

We will now carefully detail how to determine the eigenvalues of $M_{1, \beta, 0}(S)$. The case of $M_{1, \beta, \gamma}(S)$ with $\gamma \neq 0$ is very similar, as are the cases for $N_{\tau, \mu}(S)$ which are done in the following section. 

Consider now the matrix $M = M_{1, \beta, 0}(S)$ where $\beta \in \F_q^*$. Recall that 
$$
G_{1, \beta, 0}(j, k) = \frac{\beta}{4}(k^2 - j^2)
$$
and so $\eta(G_{1, \beta, 0}(j, k)) = 1$ if and only if $\eta(\beta(k^2 - j^2)) =1$. Furthermore, observe that the argument of the trace function in the exponent of any $\zeta$ term in $M$ is 
$$
 -\frac{F_{1}(j, k)^2}{4G_{1, \beta, 0}(j, k)}= - \frac{1}{16\beta}\left( \frac{(k^2 - j^2)^2}{k^2 - j^2} \right) = \frac{1}{16\beta}(j^2 - k^2).
$$
We may decompose $M$, as given in Lemma \ref{MM}, into the product $S_{y^2}(D^*U_{\beta}D)$ where $D$ is a diagonal matrix with 
$$
D_{j, j}= \zeta^{tr(\frac{j^2}{16\beta})}
$$
and $U_{\beta}$ is given as follows:
    \[
 (U_{\beta})_{j, k} = 
 \left\{
 \begin{array}{ll}
 \eta(-1)S_{y^2} & \mbox{\text{if }$j = -k$, and $j \neq 0$}. \\
\eta(\beta(k^2 - j^2)) & \mbox{\text{otherwise}}. 
 \end{array}
 \right.
 \]
In particular, we bring attention to the fact that $\eta(-1)S_{y^2} = q/S_y^2$.

\begin{lem}\label{Lem54}
Let $q = p^e$ be an odd prime power. Let $\beta \in \F_q^*$ and $\chi \in \widehat{\F_q^*}$, where $|\chi| = r > 1$. Then the vector $v_\chi$ whose coordinates are indexed by $j \in \F_q$, given by
$
(v_\chi)_j = (\chi(j))
$
is an eigenvector of $U_{\beta}$ with corresponding eigenvalue
$$
  \lambda_\chi =  \eta(\beta)\left( \sum_{t \in \F_q} \chi(t)\eta(t^2- 1)\right) +(-1)^{(q-1)/r} \eta(-1)S_{y^2}.
$$
Furthermore, $|\lambda_\chi| \leq 3\sqrt{q}$.
\end{lem}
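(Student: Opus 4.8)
The plan is to verify the eigenvector equation $U_\beta v_\chi = \lambda_\chi v_\chi$ directly, by computing the $j$-th coordinate $(U_\beta v_\chi)_j = \sum_{k \in \F_q}(U_\beta)_{j,k}\chi(k)$ and showing it equals $\lambda_\chi\chi(j)$. The two cases $j = 0$ and $j \neq 0$ must be handled separately, because the special off-diagonal entry $\eta(-1)S_{y^2}$ at position $(j,-j)$ only occurs for $j \neq 0$.

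For $j \neq 0$, I would split the sum into the single special term at $k = -j$ and the generic terms $\eta(\beta(k^2 - j^2))\chi(k)$ with $k \neq -j$. The key simplifying observation is that the generic entry $\eta(\beta(k^2-j^2))$ already vanishes at $k = -j$ (and at $k = j$), since $\eta(0) = 0$; this lets me rewrite the generic part as the clean full sum $\eta(\beta)\sum_{k \in \F_q}\eta(k^2 - j^2)\chi(k)$ with no exceptional indices. Since $j \neq 0$, the substitution $k = jt$ is a bijection of $\F_q$, and using $\eta(j^2) = 1$ together with multiplicativity $\chi(jt) = \chi(j)\chi(t)$ factors this as $\eta(\beta)\chi(j)\sum_{t \in \F_q}\chi(t)\eta(t^2 - 1)$, exactly $\chi(j)$ times the first summand of $\lambda_\chi$.

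The remaining special term is $\eta(-1)S_{y^2}\chi(-j)$, and the one delicate point is to rewrite $\chi(-j) = \chi(-1)\chi(j)$ and identify $\chi(-1) = (-1)^{(q-1)/r}$. I would establish this by writing $-1 = a^{(q-1)/2}$ for a primitive element $a$, so that $\chi(-1) = \chi(a)^{(q-1)/2}$ is a power of a primitive $r$-th root of unity; a short parity argument on the integer $(q-1)/r$ (splitting on whether it is even or odd, using that $q-1$ is even) then yields the sign. Combining the two terms gives $(U_\beta v_\chi)_j = \lambda_\chi\chi(j)$ for $j \neq 0$. For $j = 0$ both sides vanish: the left side is $\eta(\beta)\sum_{k \neq 0}\chi(k) = 0$ since $\chi$ is nontrivial, and the right side is $\lambda_\chi\chi(0) = 0$.

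Finally, for the bound $|\lambda_\chi| \leq 3\sqrt{q}$, I would apply the triangle inequality to the two summands of $\lambda_\chi$. The character-sum term is handled by Corollary \ref{MainTool} in the degenerate case $c = 0$, where $\delta_{0c} = 1$ gives $\bigl|\sum_{t}\chi(t)\eta(t^2-1)\bigr| \leq 2\sqrt{q}$, while $|\eta(\beta)| = 1$; the second term contributes $|\eta(-1)S_{y^2}| = \sqrt{q}$ by Lemma \ref{square}. Adding these gives $2\sqrt{q} + \sqrt{q} = 3\sqrt{q}$. I expect the main obstacle to be purely bookkeeping: correctly tracking which entries are degenerate so that extending to the full sum is valid, and pinning down the sign $\chi(-1) = (-1)^{(q-1)/r}$.
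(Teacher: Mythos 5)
Your proposal is correct and follows essentially the same route as the paper's proof: direct verification of the eigenvector equation with the cases $j=0$ and $j\neq 0$, extension to the full sum using $\eta(0)=0$, the substitution $k=jt$ to factor out $\chi(j)$, the identification $\chi(-1)=(-1)^{(q-1)/r}$, and Corollary \ref{MainTool} (with $c=0$, so the sum contributes $2\sqrt{q}$) plus $|S_{y^2}|=\sqrt{q}$ for the bound. The only difference is that you spell out details the paper leaves implicit, namely the parity argument for $\chi(-1)$ and the explicit triangle-inequality bookkeeping.
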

\begin{proof}
Let $U = U_{\beta}$ and let us compute $(Uv_\chi)_j$. When $j = 0$, it is clear that $(Uv_\chi)_0 = 0$ since we obtain just the sum of all $(q-1)$st roots of unity. When $j \neq 0$, we have 
    \begin{align*}
    (Uv_\chi)_j &= \left(  \sum_{k \in \F_q^*} \chi(k)\eta(\beta(k^2 - j^2))\right) + \chi(-j)\eta(-1)S_{y^2}\\ 
    &= \chi(j)\left(  \eta(\beta)\left(     \sum_{t \in \F_q^*} \chi(t)\eta(t^2 - 1)\right) + \chi(-1)\eta(-1)S_{y^2}\right).
    \end{align*}
    We note that $\chi(-1) = (-1)^{(q-1)/r}$ where $r$ is the order of $\chi$. Therefore $v_\chi$ has corresponding eigenvalue 
    $$
    \lambda_\chi =  \eta(\beta) \left( \sum_{t\in \F_q} \chi(t)\eta(t^2 - 1)\right) + (-1)^{(q-1)/r} \eta(-1)S_{y^2}.
    $$
Applying Corollary \ref{MainTool}, we immediately obtain $|\lambda_\chi| \leq 3\sqrt{q}$.
\end{proof}
The above theorem gives us $q - 2$ linearly independent eigenvectors and their corresponding eigenvalues. So we are now left with determining the remaining two eigenvalues of $U_{\beta}$.

\begin{lem}\label{Lem55}
Let $q = p^e$ be an odd prime power, $\beta \in \F_q^*$. $U_{\beta}$ has 2  eigenvectors of the form 
$
(z, 1, 1, \dots, 1).
$
Furthermore the corresponding eigenvalues satisfy:
\begin{enumerate}[(i)]
    \item
    $
\lambda^2 - (S_{y^2} -2 \eta(\beta))\lambda  - (q - 1) = 0
$
if $q \equiv 1$(mod 4).
\item 
    $
\lambda^2 + S_{y^2}\lambda  + (q - 1) = 0
$
if $q \equiv 3$(mod 4).
\end{enumerate}
Furthermore, each eigenvalue satisfies $|\lambda| < 3\sqrt{q}$.
\end{lem}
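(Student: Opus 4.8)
The plan is to exploit the two-dimensional subspace that remains after the $q-2$ eigenvectors $v_\chi$ of Lemma \ref{Lem54} have been accounted for. For nontrivial $\chi$ the vectors $v_\chi$ vanish at the coordinate indexed by $0$ and satisfy $\sum_{j\neq 0}\chi(j)=0$, so they span the $(q-2)$-dimensional space of vectors that are supported on $\F_q^*$ and have zero sum there. Its complement is spanned by $e_0$ (the indicator of the coordinate $0$) and $\mathbf 1$ (the vector equal to $1$ on $\F_q^*$ and $0$ at coordinate $0$), and every vector of the proposed form $(z,1,\dots,1)$ equals $z\,e_0+\mathbf 1$. So the first step is to show that $\mathrm{span}(e_0,\mathbf 1)$ is $U_\beta$-invariant and to write down the $2\times 2$ matrix of the restriction.

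For the second step I would read the two images directly off the description of $U_\beta$. The $0$-th column gives $(U_\beta)_{0,0}=\eta(0)=0$ and, for $j\neq 0$, $(U_\beta)_{j,0}=\eta(\beta(0-j^2))=\eta(-1)\eta(\beta)$, so $U_\beta e_0=\eta(-1)\eta(\beta)\,\mathbf 1$. For $U_\beta\mathbf 1$ I need the row sums $\sum_{k\neq 0}(U_\beta)_{j,k}$. For $j=0$ every nonzero entry is $\eta(\beta k^2)=\eta(\beta)$, giving $\eta(\beta)(q-1)$. For $j\neq 0$ I isolate the exceptional entry at $k=-j$, which contributes $\eta(-1)S_{y^2}$, from the remaining entries $\eta(\beta)\eta(k^2-j^2)$ (noting that $k=\pm j$ contribute $0$ to the naive sum); the substitution $k=jt$ turns $\sum_{k\neq 0}\eta(k^2-j^2)$ into $\sum_{t\neq 0}\eta(t^2-1)$, which Lemma \ref{LemSq} evaluates. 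This makes the row sum a single constant $c$ for every $j\neq 0$, confirming invariance and producing the restriction matrix $\left(\begin{smallmatrix} 0 & \eta(\beta)(q-1)\\ \eta(-1)\eta(\beta) & c\end{smallmatrix}\right)$, whose characteristic polynomial is $\lambda^2-c\lambda-\eta(-1)(q-1)$. Substituting $\eta(-1)=1,\ c=S_{y^2}-2\eta(\beta)$ when $q\equiv 1\pmod 4$, and $\eta(-1)=-1,\ c=-S_{y^2}$ when $q\equiv 3\pmod 4$, yields exactly (i) and (ii). Since $U_\beta e_0$ is a nonzero multiple of $\mathbf 1$, the vector $e_0$ is not an eigenvector, so each eigenvector in this plane has nonzero $\mathbf 1$-component and can be normalized to the form $(z,1,\dots,1)$; as both quadratics have distinct roots, this gives the two claimed eigenvectors.

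For the final bound I would use $|S_{y^2}|=\sqrt q$ together with Lemma \ref{square} to note that $S_{y^2}$ is real when $q\equiv 1\pmod 4$ and purely imaginary when $q\equiv 3\pmod 4$. In the first case the quadratic has real coefficients, and since the product of its roots is $-(q-1)<0$ the root of larger modulus equals $\tfrac12\big(|c|+\sqrt{c^2+4(q-1)}\big)$ with $|c|\le \sqrt q+2$; monotonicity of $x\mapsto \tfrac12(x+\sqrt{x^2+4(q-1)})$ then reduces $|\lambda|<3\sqrt q$ to the elementary estimate $(5u-1)(u-1)>0$ with $u=\sqrt q$, valid for every odd prime power $q\ge 3$. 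In the second case $S_{y^2}^2=-q$, so both roots are purely imaginary with moduli $\tfrac12\big|\sqrt{5q-4}\pm\sqrt q\big|$, and $|\lambda|<3\sqrt q$ follows at once from $\sqrt{5q-4}<5\sqrt q$.

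I expect the main obstacle to be the bookkeeping in the row-sum computation for $j\neq 0$: correctly isolating the exceptional entry at $k=-j$, observing that $k=\pm j$ contribute $0$ to $\sum_{k\neq 0}\eta(\beta(k^2-j^2))$, and then matching the two parities of $q$ with the correct sign of $\eta(-1)$ and the correct value of $\sum_{t\neq 0}\eta(t^2-1)$ so that the constant $c$ comes out precisely as in (i) and (ii). Everything after that is a routine analysis of the roots of a quadratic.
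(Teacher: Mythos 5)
Your proposal is correct, and at its computational core it coincides with the paper's proof: both arguments come down to the same two linear relations --- the row-$0$ product giving $z\lambda=\eta(\beta)(q-1)$, and the common value of the row sums for $j\neq 0$, evaluated by isolating the entry at $k=-j$, substituting $k=jt$, and invoking Lemma \ref{LemSq} --- and hence to the same quadratics (i) and (ii). The difference is one of packaging. The paper posits the ansatz $v=(z,1,\dots,1)$ and solves for $z$ and $\lambda$ simultaneously from those two relations; you instead exhibit $\mathrm{span}(e_0,\mathbf{1})$ as a $U_\beta$-invariant subspace and take the characteristic polynomial of the $2\times 2$ restriction $\left(\begin{smallmatrix} 0 & \eta(\beta)(q-1)\\ \eta(-1)\eta(\beta) & c\end{smallmatrix}\right)$, which encodes exactly the same equations (note $\eta(-\beta)=\eta(-1)\eta(\beta)$ is the paper's coefficient of $z$). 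What your framing buys is the bookkeeping the paper leaves implicit: it explains a priori that exactly two eigenvalues remain after Lemma \ref{Lem54} (the $v_\chi$ span the zero-sum vectors supported on $\F_q^*$, and $(e_0,\mathbf{1})$ spans a complement), it proves that eigenvectors of the stated form actually exist (since $U_\beta e_0$ is a nonzero multiple of $\mathbf{1}$, the vector $e_0$ is not an eigenvector, and both quadratics have distinct roots, every eigenvector in this plane normalizes to $(z,1,\dots,1)$), and it makes the final bound explicit --- reducing case (i) to $(5u-1)(u-1)>0$ with $u=\sqrt{q}$ and case (ii) to $\sqrt{5q-4}<5\sqrt{q}$ --- where the paper only says the bound follows from the quadratic formula. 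Your restriction matrix, both quadratics, and both bound computations all check out, including the parity bookkeeping for $\eta(-1)$ and the fact that $S_{y^2}$ is real when $q\equiv 1 \pmod 4$ and purely imaginary when $q\equiv 3 \pmod 4$.
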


\begin{proof}
Let $v = (z, 1, 1, \dots, 1)$, where $z$ is a variable. We will show that $v$ is an eigenvector by solving for $z$ and the corresponding eigenvalue $\lambda$ simultaneously. The product of first row of $U_{\beta}$ (when $j = 0$) with $v$ gives $z \lambda = \eta(\beta)(q-1)$. For a fixed $j \in \F_q^*$, the product of the row of $U_\beta$ indexed by $j$, with $v$ gives
\begin{align} \label{Eq1}
\lambda &= \eta(-\beta)z + \sum_{k \in \F_q^*} \eta(\beta(k^2 - j^2)) + \eta(-1)S_{y^2} = \eta(-\beta)z + \eta(\beta)\left(\sum_{t \in \F_q^*}\eta(t^2 - 1)\right) + \eta(-1)S_{y^2}.
\end{align}
Lemma \ref{LemSq} computes the sum in (\ref{Eq1}). Thus
\begin{enumerate}[(i)]
    \item If $q \equiv 1$(mod 4), then 
    $
    \lambda = \eta(-\beta)z -2 \eta(\beta) + S_{y^2}.
    $
    \item If $q \equiv 3$(mod 4), then 
    $
    \lambda = \eta(-\beta)z - S_{y^2}.
    $
\end{enumerate}
Recalling that $\lambda z = \eta(\beta)(q- 1)$ allows us to solve for both $\lambda$ and $z$. Using the quadratic formula, one easily obtains that each $\lambda$ satisfies $|\lambda| < 3\sqrt{q}$.
\end{proof}

%

\subsection{Spectrum of \boldmath{$M_{1,\beta,\gamma}(S)$}}\label{Subsection_5.2}
We now turn to determining the spectrum of $M_{1, \beta, \gamma}(S)$ when $\gamma \neq 0$. The following exposition is very similar to that of the case when $\gamma = 0$ and so we will do our best not to repeat ourselves. Let $M = M_{1, \beta, \gamma}(S)$ with $\gamma \neq 0$. Recall that
$$
G_{1, \beta, \gamma}(j, k) = \frac{\beta}{4}\left( k^2 - j^2 +\frac{2\gamma(k - j)}{\beta} \right)  = \frac{\beta}{4}\left(\left( k + \frac{\gamma}{\beta} \right)^2  - \left(j + \frac{\gamma}{\beta}  \right)^2 \right),
$$
and 
$$
F_{1}(j, k) = \frac{1}{4}(k^2 - j^2).
$$
It is clear that $(M)_{j, k}$ is similar to the matrix $(M')_{j, k} = (M)_{j - \gamma/\beta, k - \gamma/\beta}$. Working now with $M'$ instead, we define
$$
G' = G'_{1, \beta, \gamma}(j, k) = G_{1, \beta, \gamma}(j - \gamma/\beta, k - \gamma/\beta), \hs \text{and} \hs F' = F'_{1} = F_1(j - \gamma/\beta, k - \gamma/\beta).
$$
Note that when $\gamma, \beta \neq 0$, then $G' = F' = 0$ only occurs when $j = k$. Thus, by Lemma \ref{MM} we have

 \[
 M'_{j, k} =  M_{j - \gamma/\beta, k - \gamma/\beta} =
 \left\{
 \begin{array}{ll}
  0 & \mbox{if $j^2 = k^2$}, \\
\eta(G'){\zeta}^{-tr(\frac{F'^2}{4G'})}S_{y^2} & \mbox{otherwise}.
 \end{array}
 \right\} = \eta(\beta(k^2 - j^2)){\zeta}^{-tr(\frac{F'^2}{4G'})}S_{y^2}.
 \]
The argument of the trace function in the exponent of the non-zero terms is of the form
\begin{align*}
-\frac{F'^2}{4G'} &= 
- \frac{1}{16\beta}\left( \frac{(k^2 - j^2 - 2\frac{\gamma}{\beta} (k - j))^2}{k^2 - j^2} \right) \\ &= -\frac{1}{16\beta} \left( \frac{(k^2 - j^2)^2}{k^2 - j^2} -4\frac{\gamma}{\beta}\frac{(k^2 - j^2)(k - j)}{k^2 - j^2} + 4\frac{\gamma^2}{\beta^2} \frac{(k-j)^2}{k^2 - j^2}\right) \\
&=-\frac{1}{16\beta}(k^2 - j^2) +\frac{\gamma}{4\beta^2}(k-j) - \frac{\gamma^2}{4\beta^3}\frac{k-j}{k+j}.
\end{align*}
Finally, let $D$ be the diagonal matrix with 
$$
D_{j, j} = \zeta^{-tr(j^2/16\beta - \gamma j/4\beta^2)}.
$$
That way we have that $M' = S_{y^2}(D^*W_{\beta, \gamma} D)$, where 
\begin{align}\label{Eq7}
(W_{\beta, \gamma})_{j, k} =  \left\{
 \begin{array}{ll}
  0, & \mbox{\text{if }$j^2 = k^2$}, \\
\eta(\beta(k^2 - j^2))\zeta^{-tr(\frac{\gamma^2(k - j)}{4\beta^3(k + j)})}, & \mbox{\text{otherwise}}. 
\end{array}\right.
\end{align}
Observe that $W_{\beta, \gamma}$ also have the same $q -2$ eigenvectors, $v_\chi$, as in the case when $\gamma = 0$.

\begin{lem}\label{Lem5}
Let $q = p^e$ be an odd prime power. Let $\beta \in \F_q^*$ and $\chi \in \widehat{\F_q^*}$, where $|\chi| = r > 1$. Then the vector $v_\chi$ whose coordinates are indexed by $j \in \F_q$, given by
$
(v_\chi)_j = (\chi(j))
$
is an eigenvector of $W_{\beta, \gamma}$ with corresponding eigenvalue
\begin{align*}\label{Eq4}
  \lambda_\chi =  \eta(\beta)\left( \sum_{\substack{t\in \mathbb{F}_q\\ t\neq -1}} \chi(t)\eta(t^2- 1)\zeta^{-tr(\frac{\gamma^2(t - 1)}{4\beta^3(t + 1)})}\right).
\end{align*}
Furthermore, $|\lambda_\chi| \leq 3\sqrt{q}.$
\end{lem}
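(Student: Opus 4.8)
The plan is to follow verbatim the strategy used for $U_\beta$ in Lemma \ref{Lem54}: verify directly that $v_\chi$ is an eigenvector by computing the $j$-th coordinate of $W_{\beta,\gamma}v_\chi$ and exhibiting it as a scalar multiple of $(v_\chi)_j = \chi(j)$, with the scalar independent of $j$. Because the row of $W_{\beta,\gamma}$ indexed by $0$ behaves differently from the others, I would treat the cases $j = 0$ and $j \neq 0$ separately.

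For $j = 0$, the only index with $k^2 = j^2$ is $k = 0$, and for every $k \neq 0$ the entry $(W_{\beta,\gamma})_{0,k} = \eta(\beta k^2)\zeta^{-tr(\gamma^2/(4\beta^3))} = \eta(\beta)\zeta^{-tr(\gamma^2/(4\beta^3))}$ is a single constant independent of $k$, using $\eta(k^2) = 1$. Hence $(W_{\beta,\gamma}v_\chi)_0$ equals that constant times $\sum_{k \neq 0}\chi(k) = 0$, since $\chi$ is nontrivial; this matches $\lambda_\chi\chi(0) = 0$. For $j \neq 0$ the substitution $k = tj$ is the crux: it sends $\eta(\beta(k^2 - j^2)) = \eta(\beta)\eta(j^2)\eta(t^2-1) = \eta(\beta)\eta(t^2-1)$ (again $\eta(j^2) = 1$), turns $\chi(k)$ into $\chi(t)\chi(j)$, and — the key point making all rows consistent — collapses the argument $\frac{\gamma^2(k-j)}{4\beta^3(k+j)}$ to $\frac{\gamma^2}{4\beta^3}\cdot\frac{t-1}{t+1}$, which no longer depends on $j$. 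The terms with $k^2 = j^2$ (i.e. $t = \pm 1$) vanish by the definition of $W_{\beta,\gamma}$, and the term $k = 0$ (i.e. $t = 0$) drops out because $\chi(0) = 0$; these are precisely the values excluded or annihilated in the claimed summation range $t \neq -1$ (at $t = 0$ one has $\chi(0) = 0$, and at $t = 1$ one has $\eta(t^2-1) = 0$). Factoring out $\chi(j)$ then gives $(W_{\beta,\gamma}v_\chi)_j = \lambda_\chi\chi(j)$ with $\lambda_\chi$ exactly the asserted character sum.

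For the modulus bound I would invoke Corollary \ref{MainTool2} with $c = \gamma^2/(4\beta^3)$: since we are in the case $\gamma \neq 0$ and $\beta \in \F_q^*$, this constant lies in $\F_q^*$, so the corollary applies and bounds the inner sum by $3\sqrt{q}$; as $|\eta(\beta)| = 1$, we conclude $|\lambda_\chi| \leq 3\sqrt{q}$. I expect the only delicate point to be the change of variables $k = tj$, and in particular checking that the $\zeta$-exponent genuinely loses its dependence on $j$ — everything else is bookkeeping once that simplification is in hand. Note that, unlike the $\gamma = 0$ case of Lemma \ref{Lem54}, here $c$ is nonzero, so Corollary \ref{MainTool2} is the appropriate tool in place of Corollary \ref{MainTool}. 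As in that case, this lemma accounts for only $q-2$ of the eigenvalues; the remaining two eigenvectors of the form $(z, 1, \dots, 1)$ would be handled separately, by the same method as Lemma \ref{Lem55}.
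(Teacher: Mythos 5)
Your proposal is correct and takes essentially the same route as the paper: the paper's proof also verifies the eigenvector equation directly, disposing of the $j=0$ row immediately and using the substitution $k = tj$ for $j \in \F_q^*$ to collapse the exponent $\frac{\gamma^2(k-j)}{4\beta^3(k+j)}$ to $\frac{\gamma^2(t-1)}{4\beta^3(t+1)}$, then invokes Corollary \ref{MainTool2} (valid since $\gamma \neq 0$ in this subsection, so $c = \gamma^2/4\beta^3 \in \F_q^*$) for the bound. If anything, your write-up is slightly more explicit than the paper's, spelling out the vanishing of the $j=0$ row sum and the bookkeeping for the excluded values $t = 0, \pm 1$.
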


\begin{proof}
Let $W = W_{\beta, \gamma}$. Computing $Wv_\chi$, we obtain immediately that when $j = 0$, $(Wv_\chi)_j = 0$. For a fixed $j \in \F_q^*$, we compute
\begin{align*}
&(Wv_\chi)_j = \sum_{\substack{k\in \mathbb{F}_q\\ k\neq -j}}\chi(k)\eta(\beta(k^2 - j^2))\zeta^{-tr(\frac{\gamma^2(k - j)}{4\beta^3(k + j)})} = \chi(j)\left(\eta(\beta)\sum_{\substack{t\in \mathbb{F}_q\\ t\neq -1}}\chi(t)\eta(t^2 - 1)\zeta^{-tr(\frac{\gamma^2(t - 1)}{4\beta^3(t + 1)})}\right). 
\end{align*}
Applying Corollary \ref{MainTool2}, we immediately obtain the desired bound.
\end{proof}
\begin{lem}\label{Lem6}
Let $q = p^e$ be an odd prime power, $\beta \in \F_q^*$. $W_{\beta}$ has 2  eigenvectors of the form 
$
(z, 1, 1, \dots, 1).
$
Furthermore the corresponding eigenvalues satisfy:
\begin{enumerate}[(i)]
    \item
    $
\lambda^2 - (S_{y^2} -2\cos(\frac{2\pi}{p}tr(\frac{\gamma^2}{4\beta^3})))\lambda  - (q - 1) = 0
$
if $q \equiv 1$(mod 4).
\item 
    $
\lambda^2 - (S_{y^2} - 2\cos(\frac{2\pi}{p}tr(\frac{\gamma^2}{4\beta^3})))\lambda  + (q - 1) = 0
$
if $q \equiv 3$(mod 4).
\end{enumerate}
Furthermore, each eigenvalue satisfies $|\lambda| < 3\sqrt{q}$.
\end{lem}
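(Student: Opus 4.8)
The plan is to mimic the proof of Lemma~\ref{Lem55}, isolating the two eigenvectors that are not of the form $v_\chi$. Since every $v_\chi$ (for nontrivial $\chi$) vanishes at the coordinate indexed by $0$, these $q-2$ vectors span a $(q-2)$-dimensional subspace of $\{v : v_0 = 0\}$, and the complementary $W_{\beta,\gamma}$-invariant plane is spanned by the standard basis vector supported at $0$ and by $(0,1,\dots,1)$. Hence I look for the remaining eigenvectors in the form $v=(z,1,1,\dots,1)$, with $z$ and $\lambda$ to be determined. Reading off the row of $W_{\beta,\gamma}$ indexed by $0$ gives one scalar equation $\lambda z = \sum_{k\neq 0}(W_{\beta,\gamma})_{0,k}$, and reading off any row indexed by a fixed $j\in\F_q^*$ gives a second equation $\lambda = (W_{\beta,\gamma})_{j,0}\,z + \sum_{k\neq 0}(W_{\beta,\gamma})_{j,k}$; the idea is to show the right-hand side of the second equation is independent of $j$, then solve the two equations simultaneously and eliminate $z$.

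The crux is evaluating the generic row sum. Writing $c=\gamma^2/4\beta^3$ and substituting $t=k/j$ turns $\sum_{k\neq 0}(W_{\beta,\gamma})_{j,k}$ into $\eta(\beta)\sum_{t\neq 0,\pm1}\eta(t^2-1)\zeta^{-tr(c(t-1)/(t+1))}$, which is manifestly independent of $j$ (so $v$ really is an eigenvector) and is exactly the trivial-character instance of the sum treated in Corollary~\ref{MainTool2}. Applying the Möbius substitution $s=(t-1)/(t+1)$ from that proof, together with the identity $\eta(t^2-1)=\eta(s)$, collapses the main part to the quadratic Gauss sum $\sum_{s\in\F_q}\eta(s)\zeta^{-tr(cs)}=\eta(-c)S_{y^2}$, while the excluded values $t=0,\pm1$ contribute the boundary phases $\zeta^{\pm tr(c)}$. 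I expect this Gauss-sum evaluation, and the careful bookkeeping of the three boundary terms, to be the main obstacle; the phases $\zeta^{tr(c)}$ and $\zeta^{-tr(c)}$ are what assemble into the trigonometric term $2\cos(\tfrac{2\pi}{p}tr(c))$ in the final answer (using $\zeta=e^{2\pi i/p}$), and the identity $\eta(\beta)\eta(-c)=\eta(-1)$ converts the Gauss-sum contribution into $\eta(-1)S_{y^2}$.

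With the row sums in hand, the $0$-th row equation reads $\lambda z = (q-1)\eta(\beta)\zeta^{-tr(c)}$, which lets me substitute for $z$ in the second equation and clear denominators, producing a single quadratic in $\lambda$ whose constant term is $-\eta(-1)(q-1)$. Splitting on $\eta(-1)=\pm1$, i.e.\ on $q\equiv1$ or $q\equiv3\pmod 4$, then yields the two stated quadratics, the sign of the constant term flipping with $\eta(-1)$ exactly as in Lemma~\ref{Lem55}. Finally, to obtain $|\lambda|<3\sqrt q$ I apply the quadratic formula: since $|S_{y^2}|=\sqrt q$ by Lemma~\ref{square} and the trigonometric term is bounded by $2$, the linear coefficient has modulus at most $\sqrt q+2$. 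When $q\equiv1\pmod4$ both $S_{y^2}$ and the coefficients are real and a direct estimate of the discriminant gives the bound; when $q\equiv3\pmod4$, where $S_{y^2}$ is purely imaginary, the substitution $\lambda=i\mu$ reduces the quadratic to a real quadratic in $\mu$ to which the same estimate applies.
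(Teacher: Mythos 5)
Your proposal is correct and takes essentially the same route as the paper's own proof: the same ansatz $(z,1,1,\dots,1)$, the same two row equations (row $0$ giving $\lambda z = (q-1)\eta(\beta)\zeta^{-tr(\gamma^2/4\beta^3)}$, a generic row giving the second relation), the same substitutions $t=k/j$ and $s=(t-1)/(t+1)$ turning the generic row sum into a Gauss sum plus boundary phases, elimination of $z$ to get a quadratic with constant term $-\eta(-1)(q-1)$, and a quadratic-formula estimate for $|\lambda|<3\sqrt{q}$. Your only genuine additions are the invariant-plane justification of the ansatz and the explicit bookkeeping of quadratic-character factors such as $\eta(\beta)\eta(-c)=\eta(-1)$, details which the paper's displayed computation suppresses.
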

\begin{proof}
Let $W = W_{\beta, \gamma}$ and $v = (z, 1, 1, \dots, 1)$ be an eigenvector of $W$. We will solve for the corresponding eigenvalue $\lambda$ and $z$ simultaneously.  The product of first row of $W_{\beta}$ (when $j = 0$) with $v$ gives  $\eta(\beta)\zeta^{-tr(\gamma^2/4\beta^3)}(q-1) = z \lambda$. Denote $c = \eta(-\beta)\zeta^{tr(\gamma^2/4\beta^3)}$.  For a fixed $j \in \F_q^*$, the product of the row of $W$ indexed by $j$, with $v$ gives
$$
(W v)_j =cz + \sum_{\substack{k \in \F_q \\ k \neq 0, -j}} \eta(\beta(k^2 - j^2))\zeta^{-tr(\frac{\gamma^2(k-j)}{4\beta^3(k+j)})} = cz + \eta(\beta)\left(\sum_{\substack{t \in \F_q \\ t \neq 0, -1}}\eta(t^2 - 1)\zeta^{-tr(\frac{\gamma^2(t-1)}{4\beta^3(t+1)})}\right).
$$
We note that for $t \neq -1$, $\eta(t^2 - 1) = \eta(\frac{t-1}{t+1})$ and so by performing the change of variable $s = \frac{t-1}{t+1}$, we have that
\begin{equation}\label{eqnum1}
(W v)_j =  cz + \eta(\beta)\sum_{\substack{s\in \mathbb{F}_q\\ s \neq -1, 1}}\eta(s)\zeta^{-tr(\frac{\gamma^2}{4\beta^3}s)} = cz + S_{y^2} - (\zeta^{tr(\frac{\gamma^2}{4\beta^3})} + \zeta^{-tr(\frac{\gamma^2}{4\beta^3})})  =  \lambda.
\end{equation}
Recalling that $\lambda = \eta(\beta)\zeta^{-tr(\gamma^2/4\beta^3)}(q-1)/z$ allows us to solve for $\lambda$. Furthermore, the quadratic formula gives that the eigenvalues are bounded  by $3\sqrt{q}$.
\end{proof}
Thus we have shown that for each matrix $M_{\alpha, \beta, \gamma}(S)$, the corresponding matrix $U_\beta$ or $W_{\beta, \gamma}$ we constructed has all eigenvalues bounded by $3\sqrt{q}$. Recall that either $M_{\alpha, \beta, \gamma}(S) = S_{y^2}(D^*U_\beta D)$ or $M_{\alpha, \beta, \gamma}(S) = S_{y^2}(D^*W_{\beta, \gamma} D)$, meaning if  $\lambda$ is an eigenvalue of $U_\beta$ (or $W_{\beta, \gamma}$ respectively), then $S_{y^2}\lambda$ is an eigenvalue of $M_{\alpha, \beta, \gamma}(S)$. So we conclude that if $\lambda$ is an eigenvalue of $M_{\alpha, \beta, \gamma}(S)$, then $|\lambda| \leq 3q$.
\section{Spectrum of matrices \boldmath{$N_{\tau, \mu}(S)$}}\label{NMatrix}

As we will show, the matrices $N_{\tau, \mu}(S)$ are very much akin to $M_{\alpha, \beta, \gamma}(S)$. With that in mind, we will avoid duplicating computations. In this section, our goal is to determine the spectrum of matrices $N_{\tau, \mu}(S)$ and demonstrate that each eigenvalue of $N_{\tau, \mu}$ satisfies $|\lambda| \leq 3q$. Recall that 
$$
N_{\tau, \mu}(S) = \sum_{a\in \mathbb{F}_q}\sum_{\substack{x\in \mathbb{F}_q\\ x\neq 0}} [\zeta^{[tr(xa^2 j+\tau x^2a^2 +\mu xa)]}\delta_{2x\tau +j, k}]_{j, k\in \F_q}.
$$
We can follow the same steps as for the  computation for $M_{\alpha, \beta, \gamma}(S)$. Let $f(x, j) = \mu x$ and $g(x, j) = xj + \tau x^2$.
Then we may set 
$$
F_{\tau, \mu}(j, k) = f\left(\frac{k - j}{2\tau}, j\right) = \frac{\mu(k - j)}{2\tau}, \hs 
G_\tau(j, k) = g\left(\frac{k  - j}{2\tau}, j\right) = \frac{k^2 - j^2}{4\tau}.
$$

\begin{lem}
Let $N = N_{\tau, \mu}(S)$, $F = F_{\tau, \mu}(j, k)$, and $G = G_{\tau}(j, k)$. Then
 \[
 N_{j, k} = 
 \left\{
 \begin{array}{ll}
\eta(G){\zeta}^{-tr(\frac{F^2}{4G})}S_{y^2}, & \mbox{\text{if } $G \neq 0$}, \\
 0, & \mbox{\text{if } $G = 0$ and $F \neq 0$ } \\
 0, & \mbox{\text{if } $j = k$} \\
 q, & \mbox{\text{if otherwise}}.
 \end{array}
 \right.
 \]
\end{lem}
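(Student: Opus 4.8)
The plan is to reproduce, essentially verbatim, the reasoning that established Lemma \ref{MM} for $M_{\alpha,\beta,\gamma}(S)$. The entries of $N_{\tau,\mu}(S)$ are once again one-variable exponential sums in the field variable $a$; they differ from the $M$ case only in the explicit coefficients $f$ and $g$, so the same three-step template applies.

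First I would use the Kronecker delta $\delta_{2x\tau+j,\,k}$ to collapse the double sum. Since $\tau\neq 0$, for fixed $j,k$ the delta selects the unique value $x=(k-j)/(2\tau)$. When $j=k$ this forces $x=0$, which is excluded from the summation range $x\neq 0$; hence the diagonal entries vanish, $N_{j,j}=0$. For $j\neq k$ the selected $x$ is a nonzero element of $\F_q$, and the entry reduces to
\[
N_{j,k}=\sum_{a\in\F_q}\zeta^{tr(aF+a^2G)},
\]
where $F=F_{\tau,\mu}(j,k)$ and $G=G_\tau(j,k)$ arise from substituting $x=(k-j)/(2\tau)$ into the coefficient of $a$, namely $f(x,j)=\mu x$, and the coefficient of $a^2$, namely $g(x,j)=xj+\tau x^2$. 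A one-line simplification of $g$ confirms the stated formula $G_\tau(j,k)=(k^2-j^2)/(4\tau)$.

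Next I would split into cases exactly as in Lemma \ref{MM}. If $G\neq 0$, I complete the square, $aF+a^2G=G\bigl(a+\tfrac{F}{2G}\bigr)^2-\tfrac{F^2}{4G}$, translate the summation variable (a bijection of $\F_q$), and apply Lemma \ref{square} to obtain $\sum_{a}\zeta^{tr(Ga^2)}=\eta(G)S_{y^2}$, so that $N_{j,k}=\eta(G)\zeta^{-tr(F^2/4G)}S_{y^2}$. If $G=0$ but $F\neq 0$, the exponent is linear in $a$, and the exponential-sum lemma for linear polynomials (the first lemma of Section \ref{Section_background}) gives $N_{j,k}=0$. If $G=0$ and $F=0$, which together with $j\neq k$ lands us in the final ``otherwise'' branch, the summand is identically $1$ and $N_{j,k}=q$.

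There is no genuine obstacle here, since the argument is a direct transcription of the $M$ computation. The only point needing care is the precedence of the branches: the diagonal entries $j=k$ also satisfy $G=F=0$, so they would naively fall into the ``otherwise'' branch and be assigned the value $q$. I would therefore dispose of $j=k$ first, deriving $N_{j,j}=0$ from the exclusion of $x=0$, and only afterward analyze the genuine off-diagonal entries. This matches the ordering of cases written in the lemma statement.
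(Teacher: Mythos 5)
Your proof is correct and is exactly the paper's intended argument: the paper proves this lemma by remarking that one "can follow the same steps as for the computation of $M_{\alpha,\beta,\gamma}(S)$" with $f(x,j)=\mu x$ and $g(x,j)=xj+\tau x^2$, which is precisely what you carry out (delta collapses the sum at $x=(k-j)/(2\tau)$, diagonal vanishes since $x=0$ is excluded, then the three-way case split via completing the square and Lemma \ref{square}). Your explicit handling of the branch precedence for $j=k$ is a correct and welcome clarification, consistent with how the paper treats the diagonal in the $M$ case.
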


If $\mu = 0$, then $F_{\tau, \mu}(j, k) = 0$. Then we have that 
\begin{align*}
\frac{1}{S_{y^2}}N_{j, k} = \left\{ 
\begin{array}{ll}
    \eta(-1)S_{y^2} & \mbox{if $j = -k$ and $j \neq 0$}  \\
    \eta(\tau(k^2 - j^2) & \mbox{otherwise}
\end{array}  \right.
\end{align*}

We note that $\frac{1}{S_{y^2}}N_{\tau, 0}$ is precisely the matrix $U_\tau$ in section 5, whose eigenvalues were determined. So we immediately obtain the following.

\begin{lem}
Let $q = p^e$ be an odd prime power, $\tau \in \F_q^*$. Then the eigenvalues of $\frac{1}{S_{y^2}}N_{\tau, 0}(S)$ are:
\begin{enumerate}
    \item For each non-trivial $\chi \in \widehat{\F_q^*}$,   $$  \lambda_\chi =  \eta(\tau)\left( \sum_{t \in \F_q} \chi(t)\eta(t^2- 1)\right) +(-1)^{(q-1)/r} \eta(-1)S_{y^2}.
    $$
    \item  $\lambda_1, \lambda_2$, where $\lambda_1, \lambda_2$ are the roots of 
    \begin{enumerate}[(i)]
    \item
    $
\lambda^2 - (S_{y^2} -2 \eta(\tau))\lambda  - (q - 1) = 0
$
if $q \equiv 1$(mod 4).
\item 
    $
\lambda^2 + S_{y^2}\lambda  + (q - 1) = 0
$
if $q \equiv 3$(mod 4).
\end{enumerate}
\end{enumerate}
Furthermore, each eigenvalue $\lambda$ satisfies $|\lambda| < 3\sqrt{q}$.
\end{lem}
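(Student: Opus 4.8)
The plan is to recognize that no genuinely new computation is required: the matrix $\frac{1}{S_{y^2}}N_{\tau,0}(S)$ coincides \emph{entrywise} with the matrix $U_\tau$ already analyzed in Subsection \ref{Subsection_5.1}, so its spectrum can be read off directly from Lemmas \ref{Lem54} and \ref{Lem55} applied with $\beta = \tau$.

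First I would verify this matrix identification. Setting $\mu = 0$ forces $F_{\tau,\mu}(j,k) = 0$, so the display preceding the statement gives that the entry of $\frac{1}{S_{y^2}}N_{\tau,0}(S)$ at $(j,k)$ equals $\eta(-1)S_{y^2}$ when $j = -k$ and $j \neq 0$, and equals $\eta(\tau(k^2 - j^2))$ otherwise (the diagonal thereby vanishing, since $\eta(0) = 0$). Comparing with the definition of $(U_\beta)_{j,k}$ in Subsection \ref{Subsection_5.1}, this is exactly $U_\tau$. Hence it only remains to transcribe the spectrum of $U_\tau$.

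Second, I would invoke Lemma \ref{Lem54} with $\beta = \tau$: for each nontrivial $\chi \in \widehat{\F_q^*}$ of order $r$, the vector $v_\chi$ with $(v_\chi)_j = \chi(j)$ is an eigenvector of $U_\tau$ with eigenvalue $\eta(\tau)\big(\sum_{t\in\F_q}\chi(t)\eta(t^2-1)\big) + (-1)^{(q-1)/r}\eta(-1)S_{y^2}$, which is precisely the expression in part (1); these $q-2$ eigenvectors are linearly independent. Then Lemma \ref{Lem55} with $\beta = \tau$ supplies the two remaining eigenvectors of the form $(z,1,\dots,1)$, whose eigenvalues satisfy the two quadratics of part (2) according to the residue of $q$ modulo $4$. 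Since this produces $q$ linearly independent eigenvectors, it exhausts the spectrum.

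Finally, the modulus bound $|\lambda| < 3\sqrt{q}$ for every eigenvalue is inherited verbatim from the corresponding bounds proved in Lemmas \ref{Lem54} and \ref{Lem55} (resting on Corollary \ref{MainTool} and the quadratic formula), so no new estimation is needed. The only step requiring any care is the entrywise comparison in the first paragraph; once that equality is secured the argument is pure substitution, and I foresee no real obstacle.
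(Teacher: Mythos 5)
Your proposal is correct and is essentially identical to the paper's own proof: the paper likewise observes that $\mu = 0$ forces $F_{\tau,0}(j,k) = 0$, identifies $\frac{1}{S_{y^2}}N_{\tau,0}(S)$ entrywise with the matrix $U_\tau$ of Subsection \ref{Subsection_5.1}, and then reads the spectrum and the bound $|\lambda| < 3\sqrt{q}$ directly from Lemmas \ref{Lem54} and \ref{Lem55} with $\beta = \tau$.
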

 
 When $\mu \neq 0$, then 
 \begin{align}\label{Eq8}
 -\frac{F^2}{4G} = -\frac{1}{4}\left(\frac{\mu^2(k - j)^2)}{4\tau^2}\right) \frac{4\tau}{k^2 - j^2} = -\frac{\mu^2(k - j)}{4\tau(k+j)}.
 \end{align}
Note that when $\mu \neq 0$, then the only difference between the matrix $\frac{1}{S_{y^2}}N_{\tau, \mu}$ and the matrix $W_{\beta, \gamma}$ in (\ref{Eq7}), is the constant in the argument of the trace function. In particular, if $N = N_{\tau, \mu}$, then 
\[
\frac{1}{S_{y^2}}N_{j, k} =  \left\{
 \begin{array}{ll}
  0, & \mbox{\text{if }$j^2 = k^2$}, \\
\eta(\tau(k^2 - j^2))\zeta^{-tr(\frac{\mu^2(k - j)}{4\tau(k + j)})}, & \mbox{\text{otherwise}}. 
\end{array}\right.
\]

As a result, the following lemma is immediate.
 
\begin{lem}
Let $q = p^e$ be an odd prime power, $\tau, \mu \in \F_q^*$. Then the eigenvalues of $\frac{1}{S_{y^2}}N_{\tau, \mu}(S)$ are:
\begin{enumerate}
    \item For each non-trivial $\chi \in \widehat{\F_q^*}$,   $$
    \lambda_\chi =  \eta(\tau)\left( \sum_{\substack{t\in \mathbb{F}_q\\ t\neq -1}} \chi(t)\eta(t^2- 1)\zeta^{-tr(\frac{\mu^2(t - 1)}{4\tau(t + 1)})}\right).
    $$
    \item  $\lambda_1, \lambda_2$, where $\lambda_1, \lambda_2$ are the roots of 
      \begin{enumerate}[(i)]
    \item
    $
\lambda^2 - (S_{y^2} -2\cos(\frac{2\pi}{p}tr(\frac{\mu^2}{4\tau})))\lambda  - (q - 1) = 0
$
if $q \equiv 1$(mod 4).
\item 
    $
\lambda^2 - (S_{y^2} - 2\cos(\frac{2\pi}{p}tr(\frac{\mu^2}{4\tau})))\lambda  + (q - 1) = 0
$
if $q \equiv 3$(mod 4).
\end{enumerate}
\end{enumerate}
Furthermore, each eigenvalue $\lambda$ satisfies $|\lambda| \leq 3\sqrt{q}$.
\end{lem}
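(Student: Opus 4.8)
The plan is to reduce everything to the analysis already carried out for the matrices $W_{\beta,\gamma}$ in Subsection \ref{Subsection_5.2}. The displayed formula for $\tfrac{1}{S_{y^2}}N_{\tau,\mu}(S)$ just above the statement shows that this matrix has exactly the same shape as $W_{\beta,\gamma}$ from \eqref{Eq7}: the quadratic-character factor $\eta(\beta(k^2-j^2))$ is replaced by $\eta(\tau(k^2-j^2))$, and, using \eqref{Eq8}, the additive constant $\tfrac{\gamma^2}{4\beta^3}$ appearing in the trace is replaced by $\tfrac{\mu^2}{4\tau}$. Since $\tau,\mu\in\F_q^*$ forces $\tfrac{\mu^2}{4\tau}\in\F_q^*$, we are in the same regime as Lemmas \ref{Lem5} and \ref{Lem6}, and I would re-run their two eigenvector computations under the replacements $\eta(\beta)\mapsto\eta(\tau)$ and $\tfrac{\gamma^2}{4\beta^3}\mapsto\tfrac{\mu^2}{4\tau}$. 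I prefer re-running the calculations to merely quoting a substitution, because the parameter governing the quadratic character and the one governing the additive character are here genuinely independent.

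For the first family of eigenvalues I would use the $q-2$ vectors $v_\chi$ with $(v_\chi)_j=\chi(j)$, $\chi$ nontrivial, exactly as in Lemma \ref{Lem5}. The zeroth coordinate of $\tfrac{1}{S_{y^2}}N\,v_\chi$ vanishes, and for fixed $j\in\F_q^*$ the substitution $t=k/j$ factors out $\chi(j)$ and leaves the eigenvalue
\[
\lambda_\chi=\eta(\tau)\sum_{\substack{t\in\F_q\\ t\neq -1}}\chi(t)\,\eta(t^2-1)\,\zeta^{-tr\left(\frac{\mu^2(t-1)}{4\tau(t+1)}\right)},
\]
which is the value claimed in part (1). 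The bound $|\lambda_\chi|\leq 3\sqrt q$ is then immediate from Corollary \ref{MainTool2} applied with the nonzero constant $c=\tfrac{\mu^2}{4\tau}$.

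For the remaining two eigenvalues I would seek eigenvectors of the form $(z,1,\dots,1)$, following Lemma \ref{Lem6}. The row $j=0$ gives $\lambda z=\eta(\tau)\zeta^{-tr(\mu^2/4\tau)}(q-1)$, while a generic row $j\in\F_q^*$ gives $\lambda=cz+\Sigma$, where $c=\eta(-\tau)\zeta^{tr(\mu^2/4\tau)}$ is the $(j,0)$ entry and $\Sigma=\eta(\tau)\sum_{t\neq 0,-1}\eta(t^2-1)\zeta^{-tr(\mu^2(t-1)/4\tau(t+1))}$. Evaluating $\Sigma$ through the change of variable $s=\tfrac{t-1}{t+1}$ turns it into a twisted Gauss sum of $\eta$, which Lemma \ref{square} (the identity $S_{y^2}=\sum_a\zeta^{tr(a^2)}$) evaluates in closed form, producing $S_{y^2}$ together with a term involving $2\cos\!\big(\tfrac{2\pi}{p}tr(\tfrac{\mu^2}{4\tau})\big)$, the precise signs being governed by $\eta(-1)$. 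Eliminating $z$ between the two relations yields a quadratic in $\lambda$ whose constant term is $-(q-1)$ when $q\equiv1\pmod 4$ and $+(q-1)$ when $q\equiv3\pmod 4$, matching the dichotomy in part (2). The quadratic formula then delivers the uniform bound $|\lambda|\leq 3\sqrt q$ in both cases, via the elementary estimate using $|S_{y^2}|=\sqrt q$ and $|2\cos(\cdot)|\leq 2$ to control the linear coefficient against the constant term $\mp(q-1)$.

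The bulk of the argument is mechanical once the correspondence with $W_{\beta,\gamma}$ is in place, so the only genuinely delicate step is the closed-form evaluation of $\Sigma$. The hard part will be carrying the factor $\eta(-1)$ correctly through the change of variable $s=\tfrac{t-1}{t+1}$ and the resulting twisted Gauss sum, since this factor is exactly what flips the sign of the constant term between the two congruence classes of $q$ modulo $4$ and so separates the conclusion into the two quadratics. With that sign bookkeeping done, both the explicit eigenvalues and the bound $|\lambda|\leq 3\sqrt q$ follow precisely as in Lemmas \ref{Lem5} and \ref{Lem6}.
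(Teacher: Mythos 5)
Your proposal is correct and is essentially the paper's own proof: the paper likewise observes that $\tfrac{1}{S_{y^2}}N_{\tau,\mu}(S)$ has exactly the form of $W_{\beta,\gamma}$ in (\ref{Eq7}) with $\beta$ replaced by $\tau$ and the trace constant $\tfrac{\gamma^2}{4\beta^3}$ replaced by $\tfrac{\mu^2}{4\tau}$, and then reads off both families of eigenvalues from Lemmas \ref{Lem5} and \ref{Lem6} together with Corollary \ref{MainTool2}. Your one point of hesitation is unfounded but harmless: the multiplicative and additive parameters are no more independent here than in Section \ref{Subsection_5.2}, since $\eta\bigl(\tfrac{\mu^2}{4\tau}\bigr)=\eta(\tau)$ exactly as $\eta\bigl(\tfrac{\gamma^2}{4\beta^3}\bigr)=\eta(\beta)$ (indeed $\tfrac{1}{S_{y^2}}N_{\tau,\mu}(S)=W_{\tau,\mu\tau}$), so re-running the two eigenvector computations rather than quoting them is unnecessary, though it reaches the same conclusion.
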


Thus, if $\lambda$ is an eigenvalue of $N_{\tau, \mu}$, then $|\lambda| \leq 3q$. Hence, we prove Theorem \ref{Main_Theorem}.

\section{Final Remarks}\label{Section_Finalremarks}
In this paper, we prove that the second largest eigenvalue of $D(5,q)$ is less than or equal to $2\sqrt{q}$ by instead studying the spectrum of their point collinearity graph. Our proof assumes that $q$ is an odd prime power. If $q$ is an even prime power we managed to find the Cayley graph description, but could not compute its irreducible representations yet. We choose to omit the details in this paper. The conjecture \ref{Main_Conjecture} is still wide open for $k\geq 7$. 
We also note that the covering property of the family $D(k,q)$ ensures that the spectrum of $D(k,q)$ embeds in that of $D(k+1,q)$, see, e.g., \cite[Sec.\ 3C]{LW}.

\section{Acknowledgements}
The research of V.\ Taranchuk was  partially supported by the Simons Foundation Award ID: 426092 and National Sciency Foundation Grant: 1855723.
The authors would like to thank Sebastian Cioab\u{a} for suggesting this interesting problem and Felix Lazebnik for his useful comments that improved the exposition of this paper. We would also like to thank Jason Williford for sharing his insights which helped the authors find the irreducible representations of the group $G$. 



\begin{thebibliography}{99}
\addcontentsline{toc}{chapter}{Bibliography}

\bibitem {AM} N. Alon and V. D. Milman, {\em $\lambda_1$, isoperimetric inequalities for graphs, and superconcentrators}, J. Combin. Theory Ser. B, 38(1):73-88, (1985).

\bibitem {A} N. Alon, {\em Eigenvalues and expanders}, Combinatorica, 6(2):83-96, (1986).

\bibitem {Babai} L. Babai, {\em Spectra of Cayley grpahs}, J. Combin. Theory Ser. B, 27:180-189, (1979).

\bibitem {B} B. Bollob\'{a}s, {\em Modern Graph Theory}, Springer-Verlag New York Inc. (1998).

\bibitem {CLWW} X. Cao, M. Lu, D. Wan, L. -P. Wang and Q. Wang, {\em Linearized Wenger graphs}, Discrete Math., 338:1595-1602, (2015).

\bibitem {CLL} S. M. Cioab\u{a}, F. Lazebnik and W. Li, {\em On the spectrum of Wenger graphs}, J. of Combin. Theory Ser. B, 107:132-139, (2014).

\bibitem {CLS} S. M. Cioab\u{a}, F. Lazebnik and S. Sun, {\em Spectral and combinatorial properties of some algebraically defined graphs}, Electron. J. Combin., 25(4):4.60, 16, (2018).

\bibitem {DSV} G. Davidoff, P. Sarnak and A. Valette, {\em Elementary Number Theory, Group Theory, and Ramanujan Graphs}, London Mathematical Society Student Texts, 55, Cambridge University Press (2003).

\bibitem {D} J. Dodziuk, {\em Difference equations, isoperimetric inequality and transience of certain random walks}, Trans. Amer. Math. Soc., 284(2):787-794, (1984).

\bibitem {HLW} S. Hoory, N. Linial and A. Wigderson, {\em Expanders and their applications}, Bull. Amer. Math. Soc., 43:439-561, (2006).

\bibitem {KS} M. Krebs and A. Shaheen, {\em Expander families and Cayley graphs, a beginner's guide}, Oxford Univ. Press (2011).

\bibitem {LSW} F. Lazebnik, S. Sun and Y. Wang, {\em Some Families of Graphs, Hypergraphs and Digraphs Defined by Systems of Equations: A Survey}, Lecture Notes of Seminario Interdisciplinare di Matematica, 14:105–142, (2017).

\bibitem {LU} F. Lazebnik and V. A. Ustimenko, {\em Explicit construction of graphs with an arbitrary large girth and of large size}, Discrete Applied Math., 60:275–284, (1995).

\bibitem {LUW1995} F. Lazebnik, V. A. Ustimenko and A. J. Woldar, {\em A new series of dense graphs of high girth}, Bulletin of the AMS, 32(1):73–79, (1995).

\bibitem {LUW} F. Lazebnik, V. A. Ustimenko and A. J. Woldar, {\em A characterization of the components of the graphs D(k, q)}, Discrete Math. 157:271–283, (1996).

\bibitem {LW} F. Lazebnik and A. J. Woldar, {\em General properties of some families of graphs defined by systems of equations}, J. Graph Theory, 38:65–86, (2001).

\bibitem {LLW} W.-C.W. Li, M. Lu and C. Wang, {\em Recent developements in low-density parity-check codes}, in: Coding and Cryptology, in: Lecture Notes in Comput. Sci., vol. 5557, Springer, Berlin, 107-123, (2009).

\bibitem {LN} R. Lidl and H. Niederreiter, {\em Finite Fields}, Encyclopedia of Mathematics and its Applications, 20, (1983).


\textbf{}\bibitem {LPS} A.\ Lubotzky, R. Phillips and P. Sarnak, {\em Ramanujan graphs}, Combinatorica, 8:261–277, (1988).

\bibitem {mohar} B.\ Mohar, {\em Isoperimetric numbers of graphs}, J.\ of Combin.\ Theory  Ser.\ B, 47(3):274–291, (1989).

\bibitem {M} G. A. Margulis, {\em Explicit group-theoretical constructions of combinatorial schemes and their application to the design of expanders and concentrators}, J. Probl. Inf. Transm. 1988, 24(1), 39–46.

\bibitem{MSW} G. E. Moorehouse, S. Sun and  J. Williford, {\em The Eigenvalues of the Graphs $D(4, q)$}, Journal of Combinatorial Theory, Series B, 125:1–20, (2017).

\bibitem{MOR} M. Morgenstern, {\em Existence and explicit constructions of q + 1 regular Ramanujan graphs for every prime power q}, Journal of Combinatorial Theory, Series B, 62, 44–62, (1994).

\bibitem {N}  A.\ Nilli, {\em On the second eigenvalue of a graph}, Discrete Math., 91(2):207–210, (1991).

\bibitem{Schmidt} W. M. Schmidt, {\em Equations over Finite Fields: An Elementary Approach}, Lecture Notes in Mathematics, 536, (1976).

\bibitem {Sun} S. Sun, {\em On some families of algebraically defined graphs}, Ph.D. Thesis, University of Delaware, (2017).

\bibitem{Weil} A. Weil, {\em On some exponential sums}, Proc. Nat. Acad. Sci. U.S.A., 34:47–62, (1948).


\end{thebibliography}
\end{document}